\theoremstyle{plain}
  \newtheorem{theorem}{Theorem}[section]
  \newtheorem{lemma}{Lemma}[section]
  \newtheorem{proposition}{Proposition}[section]
  \newtheorem{corollary}{Corollary}[section]
  \newtheorem{definition}{Definition}[section]
\theoremstyle{remark} 
  \newtheorem{remark}{Remark}[section]
\renewcommand{\det}{\mbox{det}}
  \newcommand{\dist}{\mbox{dist}}
  \newcommand{\diam}{{\mbox{diam}}}
  \numberwithin{equation}{section}
  \numberwithin{figure}{section}
\begin{document}

\title[Nonlinear optimisation with potentials]
{A class of nonlinear optimisation and applications}

\author[Jiakun Liu]
{Jiakun Liu}

\address
	{School of Mathematics and Applied Statistics,
	University of Wollongong,
	Wollongong, NSW 2522, AUSTRALIA}
	
\email{jiakunl@uow.edu.au}

\thanks{This work is supported by the Australian Research Council, DP170100929}

\thanks{\copyright 2019 by the author. All rights reserved}

\subjclass[2000]{35J60, 35B45; 49Q20, 28C99}

\keywords{Nonlinear optimisation, potential function, Monge-Amp\`ere equation}

\date\today

\begin{abstract}
In this paper, we introduce a class of nonlinear optimisation problems. 
Under mild assumptions, we obtain the existence of potential functions and show that the potential function is a generalised solution of a Monge-Amp\`ere type equation.
We also present some interesting applications in optimal transportation and geometric optics problems.
\end{abstract}

\maketitle

\baselineskip=16.4pt
\parskip=3pt

\section{Introduction}\label{s1}

In this paper, we introduce a class of nonlinear optimisation problems, which extends Kantorovich's linear optimisation in the optimal transport problem. 
Let $U,V \subset \mathbb{R}^n$ be two bounded domains, and $\phi=\phi(x,y,t,s) : U \times V \times \mathbb{R}\times\mathbb{R} \to \mathbb{R}$ be a given constraint function, which is assumed to be $C^1$ smooth and strictly increasing in $t$ and $s$. 
\begin{definition}\label{dd11}
Let $(u,v)\in C(U)\times C(V)$ be a pair of continuous functions. 
We say $(u,v)$ is a \emph{dual pair} with respect to $\phi$ if it satisfies
	\begin{equation}\label{s110}
	\left\{
	\begin{array}{l}
		u(x) = \sup\{t\,:\,\phi(x,y,t,v(y))\leq0,\ \ \forall\, y\in V\}, \\
		v(y) = \sup\{s\,:\,\phi(x,y,u(x),s)\leq0,\ \ \forall\, x\in U\}.  
	\end{array}\right. 
	\end{equation}
Denote the set of above dual pairs by
	\begin{equation}\label{0003}
		K=\left\{(u,v)\in C(U)\times C(V)\,:\, (u,v) \text{ satisfies } \eqref{s110} \right\}.
	\end{equation}
\end{definition}
Since the function $\phi\in C^1$ and is strictly increasing in $t,s$, by the implicit function theorem there is a $C^1$ function $\varphi=\varphi(x,y,s)$ strictly increasing in $s$ such that the constraint $\phi\leq0$ can be written as
	\begin{equation}\label{ne01}
		\phi(x,y,u,v)=u+\varphi(x,y,v)\leq0.
	\end{equation}
We assume further that there exists a constant $\theta_0>0$ such that
		\begin{equation}\label{2004}
			\varphi_s\geq\theta_0\quad \mbox{in }\ U\times V\times\mathbb{R},
		\end{equation}
and $\varphi$ satisfies the condition 
	\begin{itemize}
	\item[($H_1$)] For each $x_0\in U$, for any $(p,t)\in\mathbb{R}^n\times\mathbb{R}$, there is at most one pair $(y,s)\in\mathbb{R}^n\times\mathbb{R}$ such that
		\[\left(\varphi_x,\varphi\right)(x_0,y,s)=-(p,t),\]
namely, $\varphi_x(x_0,y,s)=-p$ and $\varphi(x_0,y,s)=-t$.
And, for each $y_0\in V$, for any $(q,s)\in\mathbb{R}^n\times\mathbb{R}$, there is at most one pair $(x,t)\in\mathbb{R}^n\times\mathbb{R}$ such that
		\[\left(\frac{\varphi_y}{\varphi_s},\varphi\right)(x,y_0,s)=-(q,t),\]
namely, $(\varphi_y/\varphi_s)(x,y_0,s)=-q$ and $\varphi(x,y_0,s)=-t$.
	\end{itemize}

\vskip5pt

Under above assumptions, we have the following result, whose proof is postponed in \S\ref{s2}. 
\begin{lemma}\label{p202} 
For each dual pair $(u,v)\in K$, there exists an associated mapping $T : U\to V$ that solves the equation  
 	\begin{equation}\label{as90new}
  		u(x) + \varphi(x,Tx, v(Tx)) = 0
 	\end{equation}
and is uniquely determined almost everywhere on $U$.
Meanwhile, there exists a mapping $T^{-1} : V\to U$ solving
 	\begin{equation}\label{as90new1}
  		u(T^{-1}y) + \varphi(T^{-1}y, y, v(y)) = 0
 	\end{equation}
and is uniquely determined almost everywhere on $V$.
Moreover, $T^{-1}(Tx) = x$, a.e. on $U$ and $T(T^{-1}y) = y$, a.e. on $V$. 
\end{lemma}

For $(u,v)\in K$, define the functional
	\begin{equation}\label{0001}
		I(u,v):=\int_U u(x)f(x)\,dx + \int_V \varphi(T^{-1}y, y, v(y))g(y)\,dy,
	\end{equation}
where $f, g$ are probability density functions on $U, V$, respectively, satisfying  
	\begin{equation}\label{ne06}
		\int_U f(x)dx = \int_V g(y)dy = 1.
	\end{equation}	
In this paper, we study the nonlinear optimisation problem of 
\begin{equation}\label{prob}
	\text{maximising $I(u,v)$ in the constraint set $K$. }
\end{equation}
More generally, one could consider $U, V$ as subsets of a Riemannian manifold $\mathcal{M}$ with general measures $\mu, \nu$, but for simplicity, here we only consider the Euclidean case with absolutely continuous measures $\mu=f\,dx,\ \nu=g\,dy$. 
Some interesting examples and applications of this problem are contained in \S\ref{s5}.

Our main result is the following solvability of the nonlinear optimisation problem \eqref{prob}.

\begin{theorem}\label{t003}
Under the hypotheses \eqref{2004}, \eqref{ne06} and ($H_1$), there exists a dual maximising pair $(u,v)\in K$ of $I$, and the mapping $T:U\to V$ associated to $(u,v)$ is uniquely determined almost everywhere on $U$.
\end{theorem}
The functions $u,v$ in a dual maximising pair are called \emph{potential functions} of the nonlinear optimisation \eqref{prob}.
The associated mapping $T$ is called \emph{optimal mapping}.
These terminologies are adopted from optimal transportation \cite{MTW,U,V}.
However, it is worth pointing out that in the nonlinear case, there is generally no uniqueness for maximising pair $(u,v)$, see Remark \ref{rnl2}.

\vskip5pt

It is well known that many constrained nonlinear optimisation problems can be solved by Lagrangian dual methods (see, for example, \cite{GY}), where the convexity plays a crucial role. 
For the nonlinear optimisation \eqref{prob} under conditions \eqref{2030}--\eqref{2031}, we can show that there is no duality gap and there exists at least one Lagrange multiplier. This enables us to use the Lagrangian duality theory to study the maximisation of the functional $I$, in \S\ref{s4}.  

In order to state the result, let us introduce some terminology in Lagrangian duality. More details are contained in \S\ref{s4}.
Denote $X:=C(U)\times C(V)$. The nonlinear optimisation \eqref{prob} is a special case of the following \emph{primal problem}:
	\begin{eqnarray}
		&&\mbox{maximise } I(u,v)=\int_{U\times V}F(x,y,u(x),v(y))\,d\gamma, \label{0001new} \\ 
		&&\mbox{subject to } (u,v)\in X, \quad \psi(u,v):=\inf_{x\in U, y\in V}-\phi(x,y,u(x),v(y))\geq0, \nonumber	
	\end{eqnarray}
where $F$ is a function in $\mathbb{R}^{2n+2}$, and $d\gamma$ is a measure on $U\times V$ with $dx, dy$ as its marginals. 
We always assume that $\phi$ has the form \eqref{ne01}. 
An element $(u,v)\in X$ is called \emph{feasible} if $\psi(u,v)\geq0$.

Define the \emph{Lagrangian function} $L : X\times\mathbb{R}\to\mathbb{R}$ to be
	\begin{equation}\label{2024}
		L(u,v,\mu) = I(u,v)+\mu \psi(u,v),
	\end{equation}
where $\mu\in\mathbb{R}$. The \emph{dual functional} $J$ is defined by
	\begin{equation}\label{2028}
		J(\mu)=\sup_{(u,v)\in X}L(u,v,\mu),
	\end{equation}
and the \emph{dual problem} is given by	
	\begin{equation}\label{2029}
	\begin{split}
		&\mbox{minimise } J(\mu)\\
		&\mbox{subject to } \mu\geq0.		
	\end{split}
	\end{equation}
Regardless of the functional $I$ and the constraint $\phi$ of the primal problem, the dual problem has a very nice convexity property, as shown in Lemma \ref{la04}. In the language of nonlinear programming \cite{Ber,GY}, when $\inf_{\mu\geq0}J(\mu)=\sup_{(u,v)\in K}I(u,v)$, we say that \emph{there is no duality gap}, otherwise, \emph{there is duality gap}.

\begin{theorem}\label{t004}
Assume that the function $F$ in \eqref{0001new} is concave in $(t,s)$, namely for any $(x,y,t,s)\in U\times V\times\mathbb{R}^2$, 
		\begin{equation}\label{2030}
			Hess_{t,s}F:=\left(\begin{array}{ll}
			\partial_{tt}F,&\partial_{ts}F \\
			\partial_{st}F,&\partial_{ss}F
			\end{array}\right)\leq0,
		\end{equation}
and the constraint $\varphi$ in \eqref{ne01} is convex in $s$, namely for any $(x,y,s)\in U\times V\times \mathbb{R}$, 
		\begin{equation}\label{2031}
			\partial_{ss}\varphi\geq0.
		\end{equation}	
Suppose that there exists a pair $(\bar u,\bar v)\in X$ such that 
		\begin{equation}\label{2032}
			\psi(\bar u,\bar v)>0.
		\end{equation}
Then there is no duality gap between the primal problem \eqref{0001new} and dual problem \eqref{2029}, and there exists at least one Lagrange multiplier, (see Definition \ref{d301}).
\end{theorem}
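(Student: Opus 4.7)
The plan is to run the standard convex-duality argument via the value function in the perturbation parameter. The hypotheses \eqref{2030}--\eqref{2031} turn the primal into a concave maximization over a convex feasible set, and the Slater-type assumption \eqref{2032} supplies the constraint qualification needed to rule out a duality gap.

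\textbf{Step 1 (Concavity).} By \eqref{2030}, $I$ is concave on $X=C(U)\times C(V)$. For each $(x,y)\in U\times V$, the functional $(u,v)\mapsto -u(x)-\varphi(x,y,v(y))$ is concave on $X$: the first summand is linear, and the second is concave in $v$ by \eqref{2031} composed with the linear evaluation $v\mapsto v(y)$. Since $\psi$ is the pointwise infimum of these functionals, $\psi$ is concave on $X$, so the feasible set $K=\{\psi\geq 0\}$ is convex.

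\textbf{Step 2 (Value function and supporting slope).} Introduce
\[
 h(z):=\sup\{I(u,v):(u,v)\in X,\ \psi(u,v)\geq z\},\qquad z\in\mathbb{R},
\]
so that $h(0)=\sup_K I$. Then $h$ is concave and non-increasing. The Slater point gives $\bar\delta:=\psi(\bar u,\bar v)>0$ with $h(\bar\delta)\geq I(\bar u,\bar v)>-\infty$, hence $h$ is finite on a neighborhood of $0$ and continuous there with nonempty superdifferential. Pick any $\mu^*\in\partial h(0)$; monotonicity forces $\mu^*\geq 0$, and by definition
\[
 h(z)\leq h(0)+\mu^*z\qquad\text{for all } z\in\mathbb{R}.
\]

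\textbf{Step 3 (No duality gap, existence of multiplier).} For an arbitrary $(u,v)\in X$, setting $z=\psi(u,v)$ in the inequality above gives $I(u,v)+\mu^*\psi(u,v)\leq h(0)=\sup_K I$, hence $J(\mu^*)\leq\sup_K I$. Conversely, weak duality $J(\mu)\geq\sup_K I$ for every $\mu\geq 0$ is immediate, since $\mu\psi(u,v)\geq 0$ whenever $(u,v)\in K$. Together these yield $J(\mu^*)=\sup_K I=\inf_{\mu\geq 0}J(\mu)$, so there is no duality gap and $\mu^*$ is a Lagrange multiplier in the sense of Definition \ref{d301}.

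\textbf{Main obstacle.} The one delicate point is guaranteeing that $\partial h(0)$ is nonempty, i.e., that the support to the concave hypograph at $z=0$ is not vertical; this is precisely what \eqref{2032} buys. Without an interior feasible pair, $0$ could lie on the boundary of the effective domain of $h$, the superdifferential could be empty, and the optimal multiplier would formally blow up to $+\infty$, which is exactly the scenario of a nontrivial duality gap.
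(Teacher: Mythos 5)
Your argument is essentially the paper's own proof in different packaging: the paper separates the point $(0,I^*)$ from the convex set $A=\{(z,w):\exists\,(u,v)\in X,\ \psi(u,v)\ge z,\ I(u,v)\ge w\}$, and this set is (up to boundary points) exactly the hypograph of your value function $h$; the Slater condition \eqref{2032} is used there to force the separating vector to have $\beta>0$, i.e.\ a non-vertical supporting line at $(0,I^*)$, which is precisely your claim that $\partial h(0)\neq\emptyset$. Your Steps 1--3 thus reproduce the paper's convexity of $A$, its supporting hyperplane, and the identification of the (normalized) slope with the Lagrange multiplier, with weak duality (the paper's Lemma \ref{p203}) closing the argument. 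Like the paper's proof, yours tacitly assumes $I^*<+\infty$; this is harmless but shared.

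One sign slip should be corrected. With your convention $h(z)=\sup\{I(u,v):\psi(u,v)\ge z\}$, the function $h$ is non-increasing, and the supergradient inequality $h(z)\le h(0)+\mu^*z$ then forces $\mu^*\le 0$ (test it at $z<0$, where $h(z)\ge h(0)$), not $\mu^*\ge 0$ as you assert; moreover, plugging $z=\psi(u,v)$ into that inequality yields $I(u,v)-\mu^*\psi(u,v)\le h(0)$, not $I(u,v)+\mu^*\psi(u,v)\le h(0)$ as written in Step 3. The fix is immediate: take $\sigma\in\partial h(0)$ and set $\mu^*:=-\sigma\ge 0$ (equivalently, define the perturbation as $\psi\ge -z$ so the value function is non-decreasing and its supergradients are nonnegative). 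With that correction, $I(u,v)+\mu^*\psi(u,v)\le h(0)=I^*$ for all $(u,v)\in X$, hence $J(\mu^*)\le I^*$, and weak duality gives the reverse inequality, so $\mu^*$ is a Lagrange multiplier in the sense of Definition \ref{d301} and there is no duality gap, exactly as you conclude.
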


Note that in the special case \eqref{0001}, the convexity assumptions \eqref{2030} and \eqref{2031} imply that $\varphi=\varphi(x,y,v)$ is linear in $v$, namely 
	\[\varphi(x,y,v)=c_0(x,y)+c_1(x,y)v,\]
for some functions $c_0,c_1$, where $c_1\geq\theta_0>0$ in $U\times V$ by the monotonicity \eqref{2004}. 
When $c_1\equiv1$, it is optimal transportation with the associated cost function $-c_0$, see Example \ref{ss51}.

This paper is organised as follows: In \S\ref{s2}, we prove Lemma \ref{p202}, Theorem \ref{t003}, and show that the optimal mapping $T$ is measure preserving in the sense of \eqref{jy11} and the potential function $u$ satisfies a Monge-Amp\`ere type equation.  
In \S\ref{s3}, we introduce a notion of generalised solutions and show that a potential function is a generalised solution, from which the existence of generalised solutions follows, see Theorem \ref{the1}.
In \S\ref{s4}, it contains the Lagrangian duality theory, which provides a useful tool to obtain the existence of maximiser. Theorem \ref{t004} is proved. 
In \S\ref{s5}, we present some examples and applications of the nonlinear optimisation \eqref{prob}. 
In particular, we derive the equations in geometric optics problems from the corresponding constraints, instead of using the reflection or refraction law.

\vskip5pt

\noindent\textit{Acknowledgements.}
The author would like to thank Professor Xu-Jia Wang for proposing the problem to him.
He would also like to thank Professor Neil Trudinger for many discussions on this topic. 
This is a revision of the previous version of arXiv1203.2351.

\section{Existence of maximisers}\label{s2}

In this section, we obtain the solvability of the nonlinear optimisation problem \eqref{prob}. 
Firstly, we shall prove Lemma \ref{p202} so that the mappings $T$ and $T^{-1}$ are well-defined. 

\begin{proof}[Proof of Lemma \ref{p202}]
Since $u$ satisfies \eqref{s110} and $v,\varphi$ are continuous, for each $x\in U$, there exists some $y=:T(x)\in\overline{V}$ such that
	\begin{align}
		u(x)+\varphi(x,y,v(y)) &= 0, \label{ne03}\\
		u(x')+\varphi(x',y,v(y)) &\leq 0, \nonumber
	\end{align}
for any other $x'\in U$. Note that since $\varphi$ is $C^1$ smooth, one can see $u$ is locally Lipschitz, and thus differentiable almost everywhere. 
Let $x\in U$ be a differentiable point of $u$, by differentiation we have
	\begin{equation}\label{2014}
	  \varphi_x(x,y,v)+Du(x)=0.
	\end{equation}

Therefore, for the fixed $x\in U$, setting $t=u(x)$ and $p=Du(x)$ one can see that
	\[\varphi_x(x,y,v)=-p,\quad\mbox{and }\varphi(x,y,v)=-t.\]
From the condition ($H_1$), we then obtain the mapping $y=T(x)$ solving the equation \eqref{as90new}. 
Since $u$ is differentiable almost everywhere on $U$, the mapping $T$ is uniquely determined almost everywhere on $U$.

Similarly, we can obtain the mapping $T^{-1}$ that solves the equation \eqref{as90new1} and is uniquely determined almost everywhere on $V$. 
It remains to show that $T$ and $T^{-1}$ is essentially inverse to each other. In fact, this follows from the unique solvability of equations \eqref{as90new} and \eqref{as90new1}. For example, setting $x=T^{-1}y$ in \eqref{as90new1}, one has
	$$ u(x)+\varphi(x,y,v(y))=0. $$
On the other hand, the above equation is uniquely solved by $y=Tx$ a.e.. Hence, $T(T^{-1}y)=y$ for a.e. $y\in V$. Similarly, one can see that $T^{-1}(Tx)=x$ for a.e. $x\in U$.  
\end{proof}

To prove the existence result in Theorem \ref{t003}, we would simplify the notation a bit.
As mentioned before, the functional $I$ in \eqref{0001} is a special case of \eqref{0001new} if the function $F$ is chosen by
	\begin{equation}\label{newF}
		F(x,y,u(x),v(y)) = \frac{1}{|V|}u(x)f(x)+\frac{1}{|U|}\varphi(T^{-1}y,y,v(y))g(y).
	\end{equation}
In the following content, we always assume $F$ is given by \eqref{newF}. 

\begin{proof}[Proof of Theorem \ref{t003}]
Given any pair $(u,v)\in C(U)\times C(V)$ satisfying $\phi(x,y,u,v)\leq 0$ in $U\times V$, we claim that $I(u,v)$ does not decrease if $v$ is replaced by
	\begin{equation}\label{2007}
		v^*(y) = \sup\{s\,:\,\phi(x,y,u(x),s)\leq0,\ \ \forall\, x\in U\}.
	\end{equation}
This means it is sufficient to consider the maximisation of $I$ in the constraint set $K$. 

In fact, by the continuity of $\phi$ and $u$, for each $y\in V$ there is some $x\in\overline{U}$ such that
	\[\phi(x,y,u(x),v^*(y))=0\geq\phi(x,y,u(x),v(y)),\]
where the last inequality follows from the assumption $\phi(x,y,u,v)\leq 0$ in $U\times V$. By \eqref{ne01}--\eqref{2004}, $v^*\geq v$. Furthermore, $\phi(x,y,u(x),v^*(y))\leq0$ for all $(x,y)\in U\times V$.

Since $v^*\geq v$, by \eqref{newF} we have
	\[I(u,v^*)\geq I(u,v).\]
Similarly, if we define
	\begin{equation}\label{2006}
		u^*(x) = \sup\{t\,:\,\phi(x,y,t,v^*(y))\leq0,\ \ \forall\, y\in V\}, 
	\end{equation}
then  $\phi(x,y,u^*(x),v^*(y))\leq0$ in $U\times V$, and
	\[I(u^*,v^*)\geq I(u,v^*)\geq I(u,v).\]
Thus we do not decrease $I(u,v)$ by replacing $(u,v)$ by $(u^*,v^*)$. The claim is proved. 

Define $K_{C_0}=K\cap\{u\geq C_0\}$, where $C_0$ is a constant, which may be chosen negative and sufficiently small in the following context. We show that $u^*$ and $v^*$ are uniformly bounded if $(u,v)\in K_{C_0}$. Since $v^*\geq v, u\geq C_0$, by \eqref{ne01}--\eqref{2004} we have for each $y\in V$, $s:=v^*(y)$,
	\[C_0+\varphi(x,y,s)\leq u(x)+\varphi(x,y,s)\leq 0,\quad\mbox{for all }x\in U.\]
Then by \eqref{2004} again, there exists a constant $C_1$, such that $s\leq C_1$. This implies that
	\begin{equation}
		v\leq v^*\leq C_1,
	\end{equation}
we may choose $C_1$ such that $\sup_Vv^*=C_1$. By a similar argument, there is another constant $\tilde C_0$ depending on $\varphi$ and $C_1$ such that $\inf_Uu^*=\tilde C_0$. The constant $\tilde C_0\geq C_0$, since $u^*\geq u$ in $U$, and so $(u^*,v^*)\in K_{C_0}$.

We next deduce the lower bound of $v^*$ and the upper bound of $u^*$ by showing that $u^*$ and $v^*$ are locally Lipschitz. Consider two points in $U$, $x_1\neq x_2$ and $|x_1-x_2|<\varepsilon$ sufficiently small. There are two points $y_1,y_2\in\overline{V}$ such that
	\begin{eqnarray*}
		\phi(x_1,y_1,u^*(x_1),v^*(y_1))\!\!&=&\!\!0, \\
		\phi(x_2,y_2,u^*(x_2),v^*(y_2))\!\!&=&\!\!0.
	\end{eqnarray*}
Then by \eqref{ne01}, we have
	\[\begin{split}
		0 =& \phi(x_2,y_2,u^*(x_2),v^*(y_2))-\phi(x_1,y_2,u^*(x_1),v^*(y_2)) \\
		   &+\phi(x_1,y_2,u^*(x_1),v^*(y_2))-\phi(x_1,y_1,u^*(x_1),v^*(y_1)) \\
		  =& u^*(x_2)-u^*(x_1)-\varphi_x(\hat x,y_2,v^*(y_2))\cdot(x_2-x_1) \\
		   &+\phi(x_1,y_2,u^*(x_1),v^*(y_2)),
	\end{split}\]
where $\hat x=\theta x_1+(1-\theta)x_2$ for some $\theta\in(0,1)$. Noting that $\phi(x_1,y_2,u^*(x_1),v^*(y_2))\leq0$, we have
	\[u^*(x_2)-u^*(x_1) \geq -C_2|x_2-x_1|,\]
where the constant $C_2=\sup(|\varphi_x|+|\varphi_y|)$. 

On the other hand, replacing $\phi(x_1,y_2,u^*(x_1),v^*(y_2))$ by $\phi(x_2,y_1,u^*(x_2),v^*(y_1))$ in the above calculation, we have
	\[u^*(x_2)-u^*(x_1) \leq C_2|x_2-x_1|.\]
Therefore, the Lipschitz constant of $u^*$ on $U$ is controlled by 
	\begin{equation}\label{2010}
		\|u^*\|_{Lip(U)}\leq C_2.
	\end{equation}
By switching $x$ and $y$	in the above argument, we can obtain the Lipschitz continuity of $v^*$ on $V$,
	\[\begin{split}
		|v^*(y_2)-v^*(y_1)| &\leq \frac{\sup|\varphi_y|}{\inf \varphi_s}|y_2-y_1| \\
		 				   &\leq C_2\theta_0^{-1}|y_2-y_1|, 
	\end{split}\]
where $\theta_0$ is the constant in \eqref{2004}, $y_1, y_2$ are two distinct points in $V$. This inequality implies that $\|v^*\|_{Lip(V)}\leq C_2\theta_0^{-1}$. Hence, we have $u^*\leq \tilde C_0+C_2\diam(U)$ and $v^*\geq C_1-C_2\theta_0^{-1}\diam(V)$.

We conclude, therefore, that any pair $(u,v)\in K_{C_0}$ may be replaced by a bounded, Lipschitz pair $(u^*,v^*)\in K_{C_0}$ without decreasing $I$. We now choose a maximising sequence $\{(u_k,v_k)\}\subset K_{C_0}$ such that 
	\[I(u_k,v_k)\to\sup_{(u,v)\in K_{C_0}}I(u,v).\] 
By the above considerations we may assume that each $(u_k,v_k)$ is a bounded, uniformly Lipschitz pair, uniformly with respect to $k$,
so there is a subsequence converging uniformly to a bounded, Lipschitz, maximising pair $(\bar u,\bar v)\in K_{C_0}$.

Last, we show that when $C_0<0$ sufficiently small,
	\[\sup_{(u,v)\in K_{C_0}}I(u,v)=\sup_{(u,v)\in K}I(u,v),\]
or equivalently, $\sup_{K_{C_0}}I$ is independent of $C_0$. By definition, one has $\sup_{K_{C_0-1}}I\geq\sup_{K_{C_0}}I$. So it suffices to show the reverse inequality. 
Let $(u,v)\in K_{C_0-1}$ be a maximiser such that $I(u,v)=\sup_{K_{C_0-1}}I$, and $\{x_k\}_{k=1,\cdots,N}$ be a set of points in $U$. For a small constant $\varepsilon>0$, define
	\[\tilde u=\left\{
	\begin{array}{ll}
		u & \mbox{in }U-\cup_NB_\varepsilon(x_k), \\
		u+2 & \mbox{in }\cup_NB_\varepsilon(x_k).
	\end{array}\right.\]
Note that we may replace $\tilde u$ by its mollification $\tilde u_h=\rho_h*\tilde u$, where $\rho_h$ is the standard mollifier function \cite{GT}. For simplicity, we assume $\tilde u$ continuous in the sense that for $h>0$ sufficiently small,
	\[I(\tilde u_h,v)=I(u,v)+O(N\varepsilon^n).\]
Define
	\begin{eqnarray*}
		\tilde v^*(y) \!\!&=&\!\! \sup\{s\,:\,\phi(x,y,\tilde u(x),s)\leq0,\ \ \forall\, x\in U\}, \\
		\tilde u^*(x) \!\!&=&\!\! \sup\{t\,:\,\phi(x,y,t,\tilde v^*(y))\leq0,\ \ \forall\, y\in V\}.
	\end{eqnarray*}
Since the constraint function $\varphi$ is $C^1$ smooth in $s$ and by \eqref{ne01}--\eqref{2004}, except a set $E\subset U$ and a set $E'\subset V$ of measure $|E|=|E'|=O(N\varepsilon^n)$,
	\begin{eqnarray*}
		\tilde v^* \!\!&=&\!\! v-\frac{2}{\varphi_s}+O(\delta)\quad\mbox{in }V\setminus E', \\
		\tilde u^* \!\!&=&\!\! u+2+O(\delta)\quad\mbox{ in }U\setminus E,
	\end{eqnarray*}
where $\delta:=\min_{i\neq j}\{\dist(x_i,x_j)\}$. Therefore, by \eqref{ne06}
and the mean value theorem we have
	\[\begin{split}
		I(\tilde u^*,\tilde v^*) &= I(u,v)+2\int_{(U\setminus E)\times(V\setminus E')}\left\{F_t-\frac{F_s}{\varphi_s}\right\}d\gamma+O(\delta)+O(N\varepsilon^n) \\
		&\geq I(u,v)-C\delta-CN\varepsilon^n.
	\end{split}\]
As $(u,v)\in K_{C_0-1}$, we may assume that $\inf_U u=C_0-1$. 
Otherwise, one has $\inf_Uu=C_0-\tau_0$ for some constant $\tau_0<1$. 
This implies that $\sup_{K_{C_0-1}}I=\sup_{K_{C_0}}I$, namely $\sup_{K_{C_0}}I$ is independent of $C_0$, and the proof is finished. 
By the definition, $\delta$ will become small if the number of points $N$ is sufficiently large so that we have $(\tilde u^*,\tilde v^*)\in K_{C_0}$ and
	\[\sup_{K_{C_0}}I\geq I(\tilde u^*,\tilde v^*)\geq\sup_{K_{C_0-1}}I-C\delta-CN\varepsilon^n.\]
Then, choosing $\varepsilon>0$ sufficiently small, we have
	\[\sup_{K_{C_0-1}}I\leq\sup_{K_{C_0}}I,\]	
by letting $\delta\to0, \varepsilon\to0$, which implies that $\sup_{K_{C_0}}I$ is independent of $C_0$, and the proof is finished.
\end{proof}

\begin{remark}\label{rnl2}
From the above proof of Theorem \ref{t003}, we conclude that there exist infinitely many maximising pairs. In fact, if $(u,v)$ is a maximiser and $C_0=\inf_U u$, then there is another maximiser in ${K}_{C_0+1}$, which is different from $(u,v)$.
\end{remark}

\begin{remark}\label{rrr1}
In the condition ($H_1$), we assume that $(\varphi_x,\varphi)(x,\cdot,\cdot)$ is one-to-one in the whole space $\mathbb{R}^n\times\mathbb{R}$, for each $x\in U$. This is only for simplicity. We may allow that the constraint $\varphi$ is defined in a proper subset $\mathcal{U}\times\mathcal{I}$, where $\mathcal{U}\subset\mathbb{R}^n\times\mathbb{R}^n$ and $\mathcal{I}\subset\mathbb{R}$. Denote the projections $\mathcal{U}_x=\{y\in\mathbb{R}^n\,:\,(x,y)\in\mathcal{U}\}$, $\mathcal{U}_y=\{x\in\mathbb{R}^n\,:\,(x,y)\in\mathcal{U}\}$, let $U=\bigcup\mathcal{U}_y$, $V=\bigcup\mathcal{U}_x$. 
In this case we replace ($H_1$) by assuming that: for any $(x,y)\in\mathcal{U}$, there exists an open interval $\mathcal{I}(x,y)\subset\mathcal{I}$, such that $(\varphi_x,\varphi)(x,\cdot,\cdot)$ is one-to-one in $y\in\mathcal{U}_x, s\in\mathcal{I}(x,y)$, for each $x\in U$. 
Accordingly, the injectivity of $((\varphi_y/\varphi_s),\varphi)(\cdot,y,\cdot)$ can be restricted to $\mathcal{U}\times\mathcal{I}$ in a similar way. 
\end{remark}

\begin{definition}\label{mpmap}
A mapping $S : U\to V$ is called \emph{measure preserving} if for any $h\in C(V)$,
  \begin{equation}\label{jy11}
  	\int_{U} h(Sx)f(x)\,dx = \int_{V} h(y)g(y)\,dy.
  \end{equation}
\end{definition}

\begin{lemma}\label{le11}
Assume that the balance condition \eqref{ne06} holds. Let $T$ be the optimal mapping obtained in Lemma \ref{p202}, associated with a dual maximising pair $(u,v)$.
Then $T$ is measure preserving in the sense of \eqref{jy11}. 
\end{lemma}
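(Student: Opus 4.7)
The strategy is a first-order variational argument. Since $(u,v)$ achieves $\sup_K I$ by Lemma \ref{p201}, any admissible one-parameter perturbation $(u_\epsilon,v_\epsilon)\in K$ through $(u,v)$ must satisfy $\frac{d}{d\epsilon}\big|_{\epsilon=0}I(u_\epsilon,v_\epsilon)=0$, provided the derivative exists. I would choose a perturbation tailored to an arbitrary test function $h\in C(V)$ so that the resulting derivative is exactly the right-hand side of \eqref{jy11}.

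Fix $h\in C(V)$ and set $v_\epsilon(y):=v(y)+\epsilon h(y)$. Define $u_\epsilon$ as the $\phi$-dual of $v_\epsilon$ in the sense of \eqref{s110}:
\[
u_\epsilon(x):=\sup\{t\,:\,t+\varphi(x,y,v_\epsilon(y))\leq 0\ \forall y\in V\}=-\sup_{y\in V}\varphi(x,y,v_\epsilon(y)).
\]
Then $(u_\epsilon,v_\epsilon)\in K$ for every small $\epsilon$, $(u_0,v_0)=(u,v)$, and the argument of Lemma \ref{p201} shows that $u_\epsilon$ satisfies the uniform Lipschitz estimate \eqref{2010} independently of $\epsilon$, and $u_\epsilon\to u$ uniformly on $U$ as $\epsilon\to 0$.

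Next, I would compute $\frac{du_\epsilon(x)}{d\epsilon}\big|_{\epsilon=0}$ by an envelope argument. At any differentiability point $x\in U$ of $u$, Lemma \ref{p202} together with (H1) guarantees that the supremum defining $u_0(x)=u(x)$ is attained at the unique point $y=T(x)\in\overline V$. For $|\epsilon|$ small, continuity gives a maximizer $T_\epsilon(x)$ close to $T(x)$, and differentiating
\[
u_\epsilon(x)=-\varphi\bigl(x,T_\epsilon(x),v(T_\epsilon(x))+\epsilon h(T_\epsilon(x))\bigr)
\]
while exploiting the vanishing of the $y$-first-order condition at $T_\epsilon(x)$ (which kills the $dT_\epsilon/d\epsilon$ contribution) yields
\[
\left.\frac{du_\epsilon(x)}{d\epsilon}\right|_{\epsilon=0}=-\varphi_s(x,T(x),v(T(x)))\,h(T(x))\quad\text{a.e. on }U.
\]
The difference quotient $(u_\epsilon(x)-u(x))/\epsilon$ is uniformly bounded in $x$ and $\epsilon$, so the $C^1$-smoothness of $F$ combined with Lebesgue's dominated convergence theorem permits differentiation of $I(u_\epsilon,v_\epsilon)$ under the integral sign and gives
\[
\left.\frac{d}{d\epsilon}\right|_{\epsilon=0}I(u_\epsilon,v_\epsilon)=\int_{U\times V}\!\left\{-F_t\,\varphi_s(x,T(x),v(T(x)))\,h(T(x))+F_s\,h(y)\right\}d\gamma.
\]
Since $\epsilon=0$ maximizes $\epsilon\mapsto I(u_\epsilon,v_\epsilon)$, this derivative vanishes; running the same argument with $h$ replaced by $-h$ confirms the identity as a genuine equality for every $h\in C(V)$, which is \eqref{jy11}.

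The principal obstacle will be justifying the envelope formula for $u_\epsilon$: the potentials $u,v$ are only Lipschitz, and the maximizer $T_\epsilon(x)$ a priori possesses no regularity in $\epsilon$. This is resolved by invoking the uniqueness of $T(x)$ from hypothesis (H1), which restricts the analysis to differentiability points of $u$ where the envelope reduces to an ordinary chain rule through the vanishing first-order condition at $T_\epsilon(x)$; the uniform Lipschitz bound of Lemma \ref{p201} then supplies the dominating function required to commute the $\epsilon$-derivative with the $d\gamma$-integral.
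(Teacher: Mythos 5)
Your overall strategy is exactly the paper's: perturb $v_\epsilon=v+\epsilon h$, dualize to get $u_\epsilon$, expand $u_\epsilon-u$ to first order in $\epsilon$, and use that $\epsilon=0$ maximizes $\epsilon\mapsto I(u_\epsilon,v_\epsilon)$ to force the derivative, an integral identical to \eqref{jy11}, to vanish. The claimed formula
$\left.\tfrac{d}{d\epsilon}\right|_{\epsilon=0}u_\epsilon(x)=-\varphi_s\bigl(x,T(x),v(T(x))\bigr)h(T(x))$
is also the paper's key expansion \eqref{1011}.

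The one step that does not hold as you wrote it is the justification of this envelope formula. You propose to differentiate
$u_\epsilon(x)=-\varphi\bigl(x,T_\epsilon(x),v(T_\epsilon(x))+\epsilon h(T_\epsilon(x))\bigr)$
in $\epsilon$ and to kill the $dT_\epsilon/d\epsilon$ term by a first-order condition in $y$ at $T_\epsilon(x)$. This chain rule is not available: $h$ is merely continuous and $v$ merely Lipschitz, so $y\mapsto \varphi(x,y,v(y)+\epsilon h(y))$ need not be differentiable at $T_\epsilon(x)$; moreover $T_\epsilon(x)\in\overline V$ may lie on the boundary, where no stationarity condition holds, and nothing guarantees $T_\epsilon(x)$ is differentiable (or even single-valued) in $\epsilon$. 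Restricting to differentiability points of $u$, as you suggest, gives uniqueness of $T(x)$ via (H1) but does not restore differentiability in the $y$- or $\epsilon$-variables. The correct repair — and what the paper actually does — is a Danskin-type two-sided estimate that needs none of this: for the upper bound evaluate the constraint at $y_0=T(x)$ (where equality holds at $\epsilon=0$) and Taylor-expand only in the $s$-slot, where $\varphi$ is $C^1$; for the lower bound evaluate at a maximizer $y_\epsilon$ of the $\epsilon$-problem, use $u(x)+\varphi(x,y_\epsilon,v(y_\epsilon))\le 0$, and absorb $\varphi_s\bigl(h(y_0)-h(y_\epsilon)\bigr)\epsilon=o(\epsilon)$ using $y_\epsilon\to y_0$, which follows from the uniqueness of the maximizer $y_0$ (here is where (H1) enters) and continuity of $h$. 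With \eqref{1011} so established, your uniform bound $|u_\epsilon-u|\le C|\epsilon|$ (from boundedness of $\varphi_s$ on the relevant compact set) and dominated convergence complete the argument as you indicate; the separate run with $-h$ is unnecessary since $\epsilon$ already takes both signs.
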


\begin{proof}
Let $h\in C(V)$ and $\eta(x):=h(Tx)$. For simplicity, we may assume $T, T^{-1}$ are continuous. (Actually, since $u$ is Lipschitz continuous, the assumption holds everywhere except a zero measure set, but this does not affect the integrals in \eqref{jy11}.) Hence, $\eta\in C(U)$ is a continuous function. 
Let $|\epsilon|<1$ sufficiently small. Define
	\begin{equation}\label{1009}
		 u_\epsilon(x) = u(x)+\epsilon\eta(x)
	\end{equation}
and
	\begin{equation}\label{1010}
		v_\epsilon(y) = \sup\{s\,:\,u_\epsilon(x)+\varphi(x,y,s)\leq0,\ \ \forall\, x\in U\}.
	\end{equation}
Then $I(\epsilon):=I(u_\epsilon,v_\epsilon)$ attains its maximum at $\epsilon=0$, and $(u_0,v_0)=(u,v)$. 

Since $(u,v)$ satisfies \eqref{s110}, by Lemma \ref{p202} for $y\in V$ the supremum \eqref{s110} is attained at point $x_0=T^{-1}(y)$. We claim that at these points,
	\begin{equation}\label{1011}
		\begin{split}
		v_\epsilon(y)-v(y) &= -\epsilon\frac{\eta}{\varphi_s}(x_0,y,v(y))+o(\epsilon) \\
			&= -\epsilon\frac{\eta}{\varphi_s}(T^{-1}y,y,v(y))+o(\epsilon).
	\end{split}
	\end{equation}
	
To prove \eqref{1011}, first we show that $LHS\leq RHS$.
	\[\begin{split}
		0&=u(x_0)+\varphi(x_0,y,v(y)) \\
		 &=u_\epsilon(x_0) - \epsilon\eta(x_0) + \varphi(x_0,y,v(y)) \\
		 &\leq -\varphi(x_0,y,v_\epsilon(y)) - \epsilon\eta(x_0) + \varphi(x_0,y,v(y))  \\
		 &= -\varphi_s(x_0,y,\hat v) (v_\epsilon(y)-v(y)) - \epsilon\eta(x_0),
	\end{split}\]
where $\hat v=(1-\theta)v(y)+\theta v_\epsilon(y)$ for some $\theta\in(0,1)$, and $\hat v \to v(y)$ as $\epsilon\to0$. 
Thus we have
	\[\begin{split}
		 v_\epsilon(y) - v(y) &\leq -\epsilon\frac{\eta(x_0)}{\varphi_s(x_0,y,\hat v)} \\
		 	&= -\epsilon\frac{\eta(x_0)}{\varphi_s(x_0,y,v(y))} + \epsilon\left(\frac{\eta(x_0)}{\varphi_s(x_0,y,v(y))}- \frac{\eta(x_0)}{\varphi_s(x_0,y,\hat v)}\right) \\
		 	&= -\epsilon\frac{\eta(x_0)}{\varphi_s(x_0,y,v(y))} + o(\epsilon),
	\end{split}\]
where the last equality holds since $\varphi\in C^1$ and $\hat v\to v(y)$ as $\epsilon\to0$. 	
	
To show $LHS\geq RHS$ we use the fact that for any such $y\in V$ there are points $x_\epsilon\in\overline U$ such that the supremum in \eqref{1010} is attained. Thus
	\[\begin{split}
		0&\geq u(x_\epsilon)+\varphi(x_\epsilon,y,v(y)) \\
		 &=u_\epsilon(x_\epsilon) - \epsilon\eta(x_\epsilon) + \varphi(x_\epsilon,y,v(y)) \\
		 &= u_\epsilon(x_\epsilon) + \varphi(x_\epsilon, y, v_\epsilon(y)) - \epsilon\eta(x_\epsilon) + \varphi_s(x_\epsilon, y, \bar v) (v(y)-v_\epsilon(y)),
	\end{split}\]
where $\bar v=(1-\theta)v(y)+\theta v_\epsilon(y)$ for some $\theta\in(0,1)$, and $\bar v\to v(y)$ as $\epsilon\to0$.
Then we have
	\[ v_\epsilon(y) - v(y) \geq -\epsilon\frac{\eta(x_\epsilon)}{\varphi_s(x_\epsilon, y, \bar v)}. \]
Since the supremum in \eqref{s110} is attained at $x_0$, we have $x_\epsilon\to x_0$ as $\epsilon\to0$, and since $\eta, \varphi_s$ are continuous and $\bar v\to v(y)$ as $\epsilon\to0$, therefore, we obtain
	\[\epsilon \left( \frac{\eta(x_\epsilon)}{\varphi_s(x_\epsilon, y, \bar v)} - \frac{\eta(x_0)}{\varphi_s(x_0, y, v(y))} \right)=o(\epsilon).\]
This implies that $LHS\geq RHS$, and \eqref{1011} follows.
	
Next, since $(u,v)=(u_0,v_0)$ is a maximiser of $I$ with $F$ given by \eqref{newF}, we obtain
	\[\begin{split}
		0&=\lim_{\epsilon\to0}\frac{I(u_\epsilon,v_\epsilon)-I(u,v)}{\epsilon} \\
		 &=\int_U \eta(x)f(x)\,dx - \int_V g(y)\eta(T^{-1}y)\,dy.
	\end{split}\]
Recall that $\eta(x)=h(Tx)$ and $\eta(T^{-1}y)=h(y)$, we have
	$$ \int_U h(Tx)f(x)\,dx = \int_V h(y)g(y)\,dy. $$
\end{proof}

As a consequence, we have the following

\begin{corollary}\label{cc22}
Assume the function $F$ is given by \eqref{newF} and the condition \eqref{ne06} holds. 
If the optimal mapping $T$ is continuous differentiable, then
	\begin{equation}\label{ne11}
		|\det\,DT|=\frac{f}{g\circ T}.
	\end{equation}
\end{corollary}

\begin{proof}
From Lemma \ref{le11}, one has $T$ is measure preserving in the sense of \eqref{jy11}. 
When $T$ is $C^1$ smooth, by the formula of change of coordinates,
	\[\int_Uf(x)h(Tx)dx = \int_Ug(Tx)h(Tx)|\det\,DT|dx,\]
for any $h\in C(V)$. Hence the Jacobian of $DT$ satisfies \eqref{ne11}.	 
\end{proof}

As a consequence of Lemma \ref{le11} and Corollary \ref{cc22}, we derive the equation satisfied by the potential function $u$ as follows. At this stage, let us assume all the functions are smooth enough, say at least $C^2$, so that we can do the differentiations. 

Let $(u,v)\in K$ be a dual maximising pair of $I$, and $T$ be the associated optimal mapping. 
By \eqref{2014} we have
	\[\varphi_x(x,Tx,v(Tx))+Du(x)=0,\]
in $U$. Differentiating with respect to $x$, we then get
	\begin{equation}\label{2019}
		0= \varphi_{xx}+\varphi_{xy}DT+(\varphi_{xs}\otimes Dv)DT + D^2u,
	\end{equation}
where each side is regarded as an $n\times n$ matrix valued at $(x,y)$, $y=T(x)$.

In order to eliminate $Dv$ in \eqref{2019}, we note that for $x_0\in U$ equality \eqref{as90new} holds at $y_0=T(x_0)$, and for other $y'\in V$
	\[u(x_0)+\varphi(x_0,y',v(y'))\leq0,\]
since $(u,v)\in K$. Thus, at $(x_0,y_0)$ there holds
	\[\frac{d\varphi}{dy}=\varphi_y+\varphi_sDv=0.\]
By the assumption \eqref{2004}, we thus get
	\begin{equation}\label{2020}
		Dv=-\frac{\varphi_y}{\varphi_s}.
	\end{equation}
	
Combining \eqref{2019} and \eqref{2020} we have the equation
	\begin{equation}
		\left|D^2u+\varphi_{xx}\right|=\left|\varphi_{xy}-\frac{1}{\varphi_s}\varphi_{xs}\otimes\varphi_y\right|\left|DT\right|.
	\end{equation}
In the case of \eqref{newF}, by Corollary \ref{cc22} we obtain the equation
	\begin{equation}\label{2pde}
		\left|\det\,\left[D^2u+\varphi_{xx}\right]\right|=\left|\det\,\left[\varphi_{xy}-\frac{1}{\varphi_s}\varphi_{xs}\otimes\varphi_y\right]\right|\frac{f}{g\circ T},
	\end{equation}
which is a Monge-Amp\`ere type equation \cite{Fig,GT,Gu}. 
Correspondingly, we have the natural boundary condition
	\begin{equation}\label{bpde}
		T(U)=V.
	\end{equation}
Similarly, one can also derive the dual PDE for the dual potential $v$. 

When $\varphi(x,y,v)=x\cdot y$, then \eqref{2pde} is equivalent to the standard Monge-Amp\`ere equation
	\[\det\,D^2u=h,\]
with the boundary condition
	\[Du(U)=V,\]
where $h=f/g$. When $\varphi(x,y,v)=v-c(x,y)$ for some function $c : U\times V\to\mathbb{R}$, we have the optimal transportation equation, see Example \ref{ss51},
	\[\det\,\left[D^2u-D^2_xc\right]=|\det\,D^2_{xy}c|\frac{f}{g\circ T},\]
with the boundary condition \eqref{bpde}.

It is well-known that the regularity of equation \eqref{2pde} depends crucially on the structure of constraint function $\varphi$, as in \cite{MTW,W96}. In the next section, we shall introduce a notion of generalised solution and show that if $(u,v)$ is a dual maximising pair of $I$ over $K$, then the potential $u$ is a generalised solution of \eqref{2pde}. 
The regularity property of $u$ is related to that of generated Jacobian equations, which has been systematically studied by Trudinger in \cite{T1,T2}.
Under some structural conditions (which are analogous to the MTW condition in optimal transportation \cite{MTW}), in \cite{T1,T2,GK,JT} the authors started to develop a theory parallel to that in optimal transportation.

\section{generalised solution}\label{s3}

In this section, we introduce a notion of generalised solutions of \eqref{2pde} and show that the potential function is a generalised solution.
Let $\varphi$ be the constraint in \eqref{ne01}.
First we introduce the $\varphi$-concavity for functions, which is an extension of the $c$-concavity in optimal transportation, see \cite{GM1,MTW}. 

\begin{definition}\label{dgs1}
A $\varphi$-support function of $u$ at $x_0$ is a function of the form $\varphi(x,y_0,s_0)$, where $y_0\in\mathbb{R}^n$, and $s_0\in\mathbb{R}$ is a constant such that
	\begin{eqnarray}
		u(x_0)+\varphi(x_0,y_0,s_0)\!\!&=&\!\!0, \nonumber\\
		u(x)+\varphi(x,y_0,s_0)\!\!&\leq&\!\! 0 \quad\forall\, x\in U. \label{gs01}
	\end{eqnarray}
	
A continuous function $u$ defined on $\overline{U}$ is $\varphi$-concave if for any point $x_0\in U$, there exists a $\varphi$-support function at $x_0$. 
\end{definition}

By definition, the potential function $u$ is $\varphi$-concave with $y_0\in {V}, s_0=v(y_0)$.
In the special case when $\varphi(x,y,s)=s-x\cdot y$, the notion of $\varphi$-concavity coincides with that of concavity, and the graph of a $\varphi$-support function is a support hyperplane. 

Recall that $\varphi$ is derived from the constraint function $\phi(x,y,u,v)$ by the strict monotonicity in $u$. Since $\phi$ is also strictly increasing in $v$, the constraint \eqref{ne01} can also be written as
	\begin{equation}\label{gs02}
		\phi(x,y,u,v)=v+\varphi^*(x,y,u)\leq 0,
	\end{equation}
for a function $\varphi^*=\varphi^*(x,y,t)$ strictly increasing in $t$. The function $\varphi^*=\varphi^*(x,y,t)$ is called \emph{dual constraint function} of $\varphi$ in the sense of
	\begin{align*}
		-\varphi(x,y,-\varphi^*(x,y,t)) &= t, \\
		-\varphi^*(x,y,-\varphi(x,y,s)) &= s,
	\end{align*}
for all $(x,y)\in U\times V$. By differentiating, we have
	\begin{equation}\label{gs03}
		\varphi^*_t=\frac{1}{\varphi_s},\quad \varphi^*_x=\frac{\varphi_x}{\varphi_s},\quad \varphi^*_y=\frac{\varphi_y}{\varphi_s}.
	\end{equation}
For the dual constraint $\varphi^*$, from \eqref{gs03} and the condition ($H_1$) we have:
\begin{itemize}
	\item[($H_1^*$)] For each $y_0\in V$, for any $(q,s)\in\mathbb{R}^n\times\mathbb{R}$, there is at most one pair $(x,t)\in\mathbb{R}^n\times\mathbb{R}$ such that
	\[\left(\varphi^*_y,\varphi^*\right)(x,y_0,t)=-(q,s),\]
namely, $\varphi^*_y(x,y_0,t)=-q$ and $\varphi^*(x,y_0,t)=-s$.
\end{itemize}
	
The $\varphi$-concavity in Definition \ref{dgs1} and \eqref{gs01}--\eqref{gs03} are generalisations of $c$-concavity and $c$-duality in optimal transportation, where 
	\[\varphi(x,y,s)=s-c(x,y),\quad \varphi^*(x,y,t)=t-c(x,y),\]
for a cost function $c(x,y)$. 
The condition ($H_1$)--($H_1^*$) is the counterpart of the condition (A1) assumed on the cost function $c(x,y)$ in \cite{MTW}. 
Note that from \eqref{gs02} and \eqref{gs03}, we can directly derive \eqref{2020} for a dual pair of functions $u,v$.

Similarly, by switching $x$ and $y$, $U$ and $V$, one can also introduce the notion of $\varphi^*$-concavity for the function $v$. 
From Definition \ref{dd11} and \eqref{dgs1}, when $(u,v)\in K$ is a dual pair, $u$ is naturally $\varphi$-concave and $v$ is $\varphi^*$-concave.

Let $u$ be a $\varphi$-concave function in $U$. We define a set-valued mapping $T_{u}=T_{u,\varphi} : U\to V$. 
For any $x_0\in U$, let $T_u(x_0)$ denote the set of points $y_0$ such that $\varphi(x,y_0,s_0)$ is a $\varphi$-support function of $u$ at $x_0$ for some constant $s_0$.
For any subset $E\subset U$, we denote $T_u(E)=\cup_{x\in E}T_u(x)$.

If $u$ is $C^1$ smooth, by condition ($H_1$) $(y_0,s_0)$ is uniquely determined by $(Du(x_0),u(x_0))$, and $T_u$ is single valued. 
In this paper we call the mapping $T_u$ the \emph{$\varphi$-normal mapping} of $u$. 
Similarly we can define the $\varphi^*$-normal mapping for $\varphi^*$-concave functions. In particular, if $(u,v)\in K$ is a dual pair, we see that $y\in T_{u,\varphi}(x)$ if and only if $x\in T_{v,\varphi^*}(y)$. 

\begin{remark}
As the constraint function $\varphi$ is smooth, any $\varphi$-concave function $u$ is semi-concave, namely there exists a constant $C$ such that $u(x)-C|x|^2$ is concave. It follows that $u$ is twice differentiable almost everywhere and $T_u(x)$ is a singleton for almost all $x\in U$. 

\end{remark}

\begin{lemma}\label{lgs1}
Let $(u,v)\in K$ be a dual maximising pair of $I$. Assume that the constraint $\varphi^*$ satisfies condition ($H_1^*$). Let
	\[Y=Y_u=\left\{y\in V\,|\,\exists\, x_1\neq x_2\in U \mbox{ such that } y\in T_u(x_1)\cap T_u(x_2)\right\}.\]
Then $Y$ has Lebesgue measure zero.
\end{lemma}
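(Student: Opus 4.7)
The plan is to translate the multi-valuedness of $T_u$ at a point $y\in Y$ into a multi-valuedness of $\varphi^{*}$-support functions for the dual potential $v$ at $y$, and then rule this out on a full-measure subset of $V$ using condition (H1*) together with the a.e.\ differentiability of $v$.

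First I would fix $y\in Y$ and choose $x_1\neq x_2$ in $U$ with $y\in T_u(x_1)\cap T_u(x_2)$. Since $(u,v)$ is a dual pair, the support constant at $y$ is forced to be $s_0=v(y)$, so
\[
u(x_i)+\varphi(x_i,y,v(y))=0,\qquad u(x)+\varphi(x,y,v(y))\leq 0\ \text{ for all } x\in U,\ i=1,2.
\]
Using the involutive relation $-\varphi^{*}(x,y,-\varphi(x,y,s))=s$ from \eqref{gs02}, equivalently $v+\varphi^{*}(x,y,u)\leq 0$ with equality precisely when $u+\varphi(x,y,v)=0$, the above translates into
\[
v(y)+\varphi^{*}(x_i,y,u(x_i))=0,\qquad v(y')+\varphi^{*}(x_i,y',u(x_i))\leq 0\ \text{ for all } y'\in V.
\]
In other words, $\varphi^{*}(x_i,\cdot,u(x_i))$ is a $\varphi^{*}$-support function of $v$ at $y$ for $i=1,2$; that is, $v$ admits (at least) two distinct $\varphi^{*}$-support functions at every point of $Y$.

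Next I would invoke differentiability. By Lemma \ref{p201} (or Remark following Definition \ref{dgs1}, since $v$ is $\varphi^{*}$-concave and hence semi-concave), $v$ is Lipschitz on $V$ and therefore differentiable at almost every $y\in V$. At any such differentiable $y\in Y$, differentiating $v(y')+\varphi^{*}(x_i,y',u(x_i))\leq 0$ at $y'=y$ gives
\[
\varphi^{*}_y(x_i,y,u(x_i))=-Dv(y),\qquad \varphi^{*}(x_i,y,u(x_i))=-v(y),\quad i=1,2.
\]
Condition (H1*) applied with $q=Dv(y)$, $s=v(y)$ now forces $(x_1,u(x_1))=(x_2,u(x_2))$, contradicting $x_1\neq x_2$. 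Hence $Y$ is contained in the set of non-differentiability points of $v$, which has Lebesgue measure zero.

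The main obstacle is the first step: one must verify carefully that the $\varphi$-support condition for $u$ at $x_i$ with the parameter $y$ really is equivalent to $v(y)+\varphi^{*}(x_i,y,u(x_i))=0$ together with the global inequality $v(y')+\varphi^{*}(x_i,y',u(x_i))\leq 0$. This uses both directions of the $(\varphi,\varphi^{*})$-duality \eqref{gs02}--\eqref{gs03} and the fact that $u$ is the $\varphi$-dual of $v$ in the sense of Definition \ref{dd11}. Once this symmetric bookkeeping is settled, the differentiability argument and application of (H1*) are routine, paralleling the standard proof in the $c$-concave setting of optimal transport.
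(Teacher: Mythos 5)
Your proof is correct and follows essentially the same route as the paper: translate $y\in T_u(x_1)\cap T_u(x_2)$ into $x_1,x_2\in T_{v,\varphi^*}(y)$ via the $(\varphi,\varphi^*)$-duality, use the a.e.\ differentiability of the Lipschitz (semi-concave) function $v$, and apply (H1*) at a differentiable point to force $x_1=x_2$, so that $Y$ lies in the null set of non-differentiability points. The "main obstacle" you flag is exactly the equivalence $y\in T_{u,\varphi}(x)\iff x\in T_{v,\varphi^*}(y)$ that the paper records just before the lemma, and your verification of it (the support constant must equal $v(y)$ by strict monotonicity of $\varphi$ in $s$ and the dual-pair definition) is sound.
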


\begin{proof}
If $y\in T_u(x_1)\cap T_u(x_2)$, we have $x_1, x_2\in T_{v,\varphi^*}(y)$. From the proof of Theorem \ref{t003}, $v$ is almost everywhere differentiable. Assume $y$ is a differentiable point, then by definition
	\begin{eqnarray*}
		\varphi^*(x_i,y,u(x_i))\!\!&=&\!\!-v(y), \\
		\varphi^*_y(x_i,y,u(x_i))\!\!&=&\!\!-Dv(y),
	\end{eqnarray*}
for $i=1,2$. If $x_1\neq x_2$, this is a contradiction to ($H_1^*$).
\end{proof}

We now define a measure $\mu=\mu_{u,g}$ in $U$, where $g\in L^1(V)$ is the positive measurable function in \eqref{ne06}. Set $g\equiv0$ in $\mathbb{R}^n-V$.
For any Borel set $E\subset U$, define
	\begin{equation}\label{gs04}
		\mu(E)=\int_{T_u(E)}g(y)dy.
	\end{equation} 
It follows from Lemma \ref{lgs1} that $\mu$ is a Radon measure, and satisfies the following regularity properties: 
	\[\mu(E)=\inf\{\mu(D)\,:\,E\subset D\subset U,\, D\, \mbox{open}\}\]
for all Borel sets $E\subset U$, and
	\[\mu(D)=\sup\{\mu(K)\,:\,K\subset D,\, K\, \mbox{compact}\}\]
for all open sets $D\subset U$. For further discussion of the measure $\mu$ and its stability property, see \cite{Bak,CY,Fig,Gu,MTW}.

\begin{definition}
A $\varphi$-concave function $u$ is called a generalised solution of \eqref{2pde} if $\mu_{u,g}=fdx$ in the sense of measure, that is for any Borel set $E\subset U$,
	\begin{equation}\label{gs05}
		\int_Ef=\int_{T_u(E)}g.
	\end{equation}
\end{definition}
Note that since we extended $g=0$ to $\mathbb{R}-V$, the boundary condition \eqref{bpde} is a consequence of the mass balance condition \eqref{ne06} in the sense of $|V-T_u(U)|=0$.

The next result shows that a potential function $u$ is a generalised solution. The existence of potential in Theorem \ref{t003} then implies the existence of generalised solutions. But in general we do not have the uniqueness, see Remark \ref{rnl2}.

\begin{theorem}\label{the1}
Let $(u,v)\in K$ be a dual maximising pair of $I$. Then $u$ is a generalised solution of \eqref{2pde}.
\end{theorem}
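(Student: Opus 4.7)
The plan is to prove $\mu_{u,g}(E)=\int_E f\,dx$ for Borel $E\subset U$ by collapsing the set-valued $\varphi$-normal map $T_u$ onto the single-valued optimal map $T$ from Lemma \ref{p202}, and then invoking the measure-preservation identity \eqref{ne10}. The first key step will be to show $T_u(x)=\{T(x)\}$ almost everywhere. Since $u$ is Lipschitz (as established in the proof of Lemma \ref{p201}), it is differentiable outside a null set $N\subset U$; at a differentiable $x$, any $\varphi$-support pair $(y_0,s_0)$ for $u$ at $x$ must satisfy $\varphi_x(x,y_0,s_0)=-Du(x)$ and $\varphi(x,y_0,s_0)=-u(x)$, and (H1) then pins $(y_0,s_0)$ down uniquely to $(T(x),v(T(x)))$ by comparison with \eqref{2014}.

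Next, I will use Lemma \ref{lgs1} to obtain essential injectivity of $T$. Since the double-cover set $Y\subset V$ is Lebesgue-null, approximating $\chi_Y$ in $L^1(V,g\,dy)$ by continuous test functions and applying \eqref{ne10} will give $\int_{T^{-1}(Y)}f\,dx=\int_Y g\,dy=0$; because $f>0$ a.e., $|T^{-1}(Y)|=0$. With this I can verify, for any Borel $E\subset U$, the pointwise identity $\chi_{T_u(E)}(T(x))=\chi_E(x)$ for a.e.\ $x$: the forward direction is immediate from $T(x)\in T_u(x)\subset T_u(E)$, while the reverse inclusion can fail only when $x\notin E$ yet some $x'\in E$, $x'\neq x$, also has $T(x)\in T_u(x')$, which by definition forces $T(x)\in Y$ and so confines the exceptional $x$ to $T^{-1}(Y)\cup N$. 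Extending \eqref{ne10} from $h\in C(V)$ to the indicator $h=\chi_{T_u(E)}$ by $L^1$-density will then yield
\[
\int_E f\,dx \;=\; \int_U f(x)\,\chi_{T_u(E)}(T(x))\,dx \;=\; \int_{T_u(E)} g\,dy \;=\; \mu_{u,g}(E),
\]
which is precisely the generalized-solution property.

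The main obstacle will be the measurability of $T_u(E)$ and the justification of this $L^1$ passage from continuous test functions to $\chi_{T_u(E)}$ in \eqref{ne10}. I plan to handle this by first observing that $T_u$ is upper semi-continuous with closed values (a direct consequence of the continuity of $\varphi$ and the support condition defining $T_u$), so $T_u$ sends closed sets to closed sets; both $f\,dx$ and $g\,dy$ being finite Radon measures then permits reduction of general Borel $E$ to the closed case by outer/inner regularity, after which approximating $\chi_{T_u(K)}$ from above by continuous functions (and using that pushforward of $f\,dx$ under $T$ equals $g\,dy$) yields the required identity.
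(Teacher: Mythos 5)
Your proposal is correct and follows essentially the same route as the paper: identify the optimal mapping $T$ of Lemma \ref{p202} with the $\varphi$-normal mapping $T_u$ almost everywhere via differentiability of the Lipschitz potential and (H1), invoke the measure-preservation identity \eqref{ne10} from Corollary \ref{cc22}, and use Lemma \ref{lgs1} to discard the doubly-covered set. The paper's proof states these steps in three lines, and your extra care with the measurability of $T_u(E)$ and the passage from continuous test functions to indicators merely fills in bookkeeping that the paper subsumes in its assertion (following \eqref{gs04}) that $\mu_{u,g}$ is a Radon measure.
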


\begin{proof}
Let $(u,v)\in K$ be a dual maximising pair of $I$. Then by \eqref{s110}, $u$ is $\varphi$-convex and $v$ is $\varphi^*$-convex with respect to each other. By Lemma \ref{p202}, the optimal mapping $T$ associated to $(u,v)$, as determined by \eqref{2014}, is equal to the mapping $T_{u,\varphi}$ almost everywhere on $U$. By Corollary \ref{cc22}, $T$ is measure preserving in the sense of \eqref{jy11}. Hence $u$ is a generalised solution of \eqref{2pde}. Assumption \eqref{ne06} implies that \eqref{bpde} holds. 
\end{proof}


\section{Lagrangian duality}\label{s4}

In this section, we study the dual problem \eqref{2029} of the constrained nonlinear optimisation \eqref{0001new}, and prove Theorem \ref{t004}.
Recall that the Lagrangian function $L$ is defined in \eqref{2024}, where the constraint $\psi$ is given in \eqref{0001new}.
Denote by $I^*$ the optimal value of the primal problem \eqref{0001new}, namely
	\[I^*=\sup_{(u,v)\in K} I(u,v).\]
\begin{definition}\label{d301}
A factor $\mu^*$ is called a \emph{Lagrange multiplier} for the primal problem if $\mu^*\geq0$, and
	\[I^*=\sup\{L(u,v,\mu^*)\,:\,(u,v)\in X\}.\]
\end{definition}

\begin{lemma}\label{la03}
Let $\mu^*$ be a Lagrange multiplier. Then $(u^*,v^*)$ is a global maximum of the primal problem if and only if $(u^*,v^*)$ is feasible and 
	\begin{eqnarray}
		&&(u^*,v^*)=\arg\max_{(u,v)\in X} L(u,v,\mu^*), \label{2025}\\ 
		&&\mu^*\psi(u^*,v^*)=0. \label{2026}
	\end{eqnarray}
\end{lemma}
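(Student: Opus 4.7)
The plan is to argue both directions by directly chaining the definitions of feasibility, Lagrange multiplier, and the Lagrangian $L(u,v,\mu)=I(u,v)+\mu\psi(u,v)$, with the key observation that on the feasible set both $\mu^*\geq 0$ and $\psi\geq 0$, so the product term is nonnegative and can be pinned to zero by a sandwich inequality.

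For the forward implication, I would assume $(u^*,v^*)$ is a global maximizer of the primal. Feasibility is then immediate from the primal constraint in \eqref{2023}, so $\psi(u^*,v^*)\geq 0$ and $I(u^*,v^*)=I^*$. The next step is to sandwich $L(u^*,v^*,\mu^*)$: on one hand,
\[
L(u^*,v^*,\mu^*) = I(u^*,v^*)+\mu^*\psi(u^*,v^*)=I^*+\mu^*\psi(u^*,v^*)\geq I^*,
\]
since $\mu^*\geq 0$ and $\psi(u^*,v^*)\geq 0$; on the other hand, by the definition of Lagrange multiplier,
\[
L(u^*,v^*,\mu^*)\leq \sup_{(u,v)\in X} L(u,v,\mu^*)=I^*.
\]
Both inequalities must be equalities, which simultaneously gives $\mu^*\psi(u^*,v^*)=0$, establishing \eqref{2026}, and shows $(u^*,v^*)$ attains $\sup_X L(\cdot,\cdot,\mu^*)$, establishing \eqref{2025}.

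For the reverse implication, I would assume $(u^*,v^*)$ is feasible and that \eqref{2025}--\eqref{2026} hold. Using \eqref{2026} to cancel the multiplier term,
\[
I(u^*,v^*)=I(u^*,v^*)+\mu^*\psi(u^*,v^*)=L(u^*,v^*,\mu^*),
\]
and using \eqref{2025} together with the Lagrange multiplier definition,
\[
L(u^*,v^*,\mu^*)=\sup_{(u,v)\in X}L(u,v,\mu^*)=I^*.
\]
Hence $I(u^*,v^*)=I^*$, and since $(u^*,v^*)$ is feasible, it is a global maximum of the primal problem.

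I do not anticipate any real obstacle: the entire argument is a two-line sandwich in each direction, with no appeal to the convexity hypotheses \eqref{2030}--\eqref{2031} or to the existence of a Lagrange multiplier (which is the content of Theorem \ref{t004}, used separately). The only subtlety worth flagging is making explicit that the supremum in the definition of a Lagrange multiplier is taken over all of $X$, not just over the feasible set $K$, so that the upper bound $L(u^*,v^*,\mu^*)\leq I^*$ is genuinely available at the feasible point $(u^*,v^*)\in K\subset X$.
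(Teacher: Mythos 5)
Your proposal is correct and follows essentially the same argument as the paper: the forward direction is the same sandwich chain $I^* \leq L(u^*,v^*,\mu^*) \leq \sup_X L(\cdot,\cdot,\mu^*) = I^*$ forcing equality throughout, and the converse is the same cancellation of the multiplier term via \eqref{2026} combined with \eqref{2025} and the definition of Lagrange multiplier. No gaps; your remark about the supremum being over all of $X$ rather than just $K$ matches the paper's usage.
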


\begin{proof}
If $(u^*,v^*)$ is a global maximum of the primal problem, then $(u^*,v^*)$ is feasible and furthermore,
	\begin{equation}\label{2027}
	\begin{split}
		I^* &= I(u^*,v^*)\leq I(u^*,v^*)+\mu^*\psi(u^*,v^*) \\
		    &= L(u^*,v^*,\mu^*) \leq \sup\{L(u,v,\mu^*)\,:\,(u,v)\in C(U)\times C(V)\} \\
		    &= I^*,
	\end{split}
	\end{equation}
where the first inequality follows from the definition of Lagrange multiplier ($\mu^*\geq0$) and the feasibility of $(u^*,v^*)$ (i.e. $\psi(u^*,v^*)\geq0$). Using again the definition of Lagrange multiplier, we have $I^*=\sup_{(u,v)\in X}L(u,v,\mu^*)$, so that equality holds throughout \eqref{2027}. This implies the equalities \eqref{2025}--\eqref{2026}.
	
Conversely, if $(u^*,v^*)$ is feasible and \eqref{2025}--\eqref{2026} hold, we have from the definition of Lagrange multiplier,
	\[\begin{split}
	I(u^*,v^*) &= I(u^*,v^*)+\mu^*\psi(u^*,v^*) \\
	 &= L(u^*,v^*,\mu^*)=\max_{(u,v)\in X} L(u,v,\mu^*) = I^*,
	\end{split}\]
so $(u^*,v^*)$ is a global maximum.
\end{proof}

Recall the definitions of the dual functional $J$ in \eqref{2028} and the dual problem in \eqref{2029}.
Note that $J(\mu)$ may be equal to $+\infty$ for some $\mu$. In this case, we define the domain of $J$ to be the set of $\mu$ for which $J(\mu)$ is finite:
	\[D=\{\mu\in\mathbb{R}\,:\,J(\mu)<+\infty\}.\]

Regardless of the functional $I$ and the constraint $\phi$ of the primal problem, the dual problem \eqref{2029} has a nice convexity property, as shown in the following lemma.

\begin{lemma}\label{la04}
The domain $D$ of the dual functional $J$ is convex and $J$ is convex over $D$.
\end{lemma}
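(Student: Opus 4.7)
The plan is to exploit the fact that for each fixed $(u,v)\in X$, the map $\mu\mapsto L(u,v,\mu)=I(u,v)+\mu\psi(u,v)$ is affine in $\mu$, so $J$ is a pointwise supremum of affine functions of $\mu$. Such a supremum is automatically convex as a function into $(-\infty,+\infty]$, and both the convexity of $D$ and of $J|_D$ fall out of one standard inequality. No analytic machinery (no compactness, no continuity of $I$ or $\psi$) is required.

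Concretely, I would pick $\mu_1,\mu_2\in D$ and $\lambda\in[0,1]$, and set $\mu_\lambda=\lambda\mu_1+(1-\lambda)\mu_2$. For each $(u,v)\in X$ the identity
\[
L(u,v,\mu_\lambda)=I(u,v)+\mu_\lambda\psi(u,v)=\lambda L(u,v,\mu_1)+(1-\lambda)L(u,v,\mu_2)
\]
holds by direct substitution, using only linearity of $\mu\mapsto\mu\psi(u,v)$. Taking $\sup_{(u,v)\in X}$ on the left and using the elementary inequality $\sup(A+B)\le\sup A+\sup B$ applied to the two summands on the right gives
\[
J(\mu_\lambda)\le\lambda J(\mu_1)+(1-\lambda)J(\mu_2).
\]
Because $\mu_1,\mu_2\in D$ means $J(\mu_1),J(\mu_2)<+\infty$, the right side is finite, so $J(\mu_\lambda)<+\infty$ and hence $\mu_\lambda\in D$; this proves convexity of $D$. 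The same inequality, read as a statement about $J$ restricted to $D$, is precisely the convexity of $J$ over $D$.

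There is essentially no obstacle: the only thing worth noting is that one needs to interpret $J$ as a function into $(-\infty,+\infty]$ so that the supremum of an arbitrary family of affine functions is well-defined, and then the convexity of the effective domain is the standard fact that $\{\mu:J(\mu)<+\infty\}$ is convex whenever $J$ is convex. No hypothesis on $F$, $\varphi$, or $\phi$ beyond the definitions in \eqref{2024}--\eqref{2028} is used, which is consistent with the remark preceding the lemma that the dual convexity holds regardless of the structure of the primal problem.
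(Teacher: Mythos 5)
Your argument is correct and is essentially the paper's own proof: both use that $L(u,v,\cdot)$ is affine in $\mu$, take suprema over $(u,v)\in X$, apply the standard supremum inequality to get $J(\alpha\mu+(1-\alpha)\bar\mu)\le\alpha J(\mu)+(1-\alpha)J(\bar\mu)$, and then deduce convexity of $D$ from finiteness and convexity of $J$ over $D$. No differences worth noting.
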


\begin{proof}
For any $u,v,\mu,\bar\mu$, and $\alpha\in[0,1]$, we have
	\[L(u,v,\alpha\mu+(1-\alpha)\bar\mu)=\alpha L(u,v,\mu)+(1-\alpha)L(u,v,\bar\mu).\]
Taking the supremum over all $(u,v)\in X$, we obtain
	\[\sup L(u,v,\alpha\mu+(1-\alpha)\bar\mu) \leq \alpha\sup L(u,v,\mu)+(1-\alpha)\sup L(u,v,\bar\mu),\]	
or equivalently
	\[J\left(\alpha\mu+(1-\alpha)\bar\mu\right) \leq \alpha J(\mu)+(1-\alpha)J(\bar\mu).\]
Therefore if $\mu$ and $\bar\mu$ belong to $D$, the same is true for $\alpha\mu+(1-\alpha)\bar\mu$, so $D$ is convex. Furthermore, $J$ is convex over $D$.
\end{proof}

Another important property is that the optimal dual value
	\[J^*=\inf_{\mu\geq0} J(\mu)\]
is always an upper bound of the optimal primal value, as shown in the next lemma.

\begin{lemma}\label{p203}
We have 
	\[I^*\leq J^*.\]
\end{lemma}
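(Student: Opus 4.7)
The plan is to establish the inequality by a short chain of bounds that uses only feasibility of primal candidates and the sign condition on the dual variable, so this is the standard weak duality argument and no structural assumption on $F$ or $\varphi$ is required. First I would fix an arbitrary feasible pair $(u,v)\in K$ and an arbitrary $\mu\geq 0$. By feasibility we have $\psi(u,v)\geq 0$ from the formulation \eqref{2023}, and combined with $\mu\geq 0$ this gives $\mu\psi(u,v)\geq 0$. Adding this nonnegative quantity to $I(u,v)$ and invoking the definition \eqref{2024} of the Lagrangian yields
\[
I(u,v)\;\leq\; I(u,v)+\mu\psi(u,v)\;=\;L(u,v,\mu).
\]

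Next I would enlarge the feasible set from $K$ to the ambient space $X=C(U)\times C(V)$ inside the Lagrangian. Since $(u,v)\in K\subset X$, the definition \eqref{2028} of the dual functional gives
\[
L(u,v,\mu)\;\leq\;\sup_{(u',v')\in X}L(u',v',\mu)\;=\;J(\mu).
\]
Chaining this with the previous bound produces $I(u,v)\leq J(\mu)$ for every feasible $(u,v)$ and every $\mu\geq 0$.

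Finally I would pass to the extremal values in the correct order. Taking the supremum over $(u,v)\in K$ on the left yields $I^*\leq J(\mu)$ for all $\mu\geq 0$, and then taking the infimum over $\mu\geq 0$ on the right gives $I^*\leq J^*$, which is the claim. The argument presents no real obstacle: it is a pure definition chase, and in particular it does not require the concavity hypothesis \eqref{2030}, the convexity hypothesis \eqref{2031}, or the Slater-type condition \eqref{2032}. Those assumptions will only be needed later, to upgrade this weak duality statement to the equality $I^*=J^*$ asserted in Theorem \ref{t004}.
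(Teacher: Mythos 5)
Your proposal is correct and is essentially the same weak-duality argument as the paper's proof: bound $I(u,v)\leq L(u,v,\mu)\leq J(\mu)$ for feasible $(u,v)$ and $\mu\geq 0$, then take the supremum over $K$ and the infimum over $\mu\geq 0$. Your remark that no concavity, convexity, or Slater-type hypothesis is needed also matches the paper, which states the lemma without those assumptions.
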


\begin{proof}
For all $\mu\geq0$, and $(u,v)\in X$ with $\psi(u,v)\geq0$, we have
	\[\begin{split}
		J(\mu)&=\sup_{(\tilde u,\tilde v)\in X} L(\tilde u,\tilde v,\mu) \\
		 &\geq I(u,v)+\mu \psi(u,v) \geq I(u,v),
	\end{split}\]
and therefore,
	\[J^*=\inf_{\mu\geq0} J(\mu)\geq\sup_{(u,v)\in K} I(u,v)=I^*.\]	
\end{proof}

In the language of nonlinear programming \cite{Ber,GY}, if $J^*=I^*$ we say that \emph{there is no duality gap}; if $J^*>I^*$ \emph{there is duality gap}.
Note that if there exists a Lagrange multiplier $\mu^*$, the above lemma ($J^*\geq I^*$) and the definition of Lagrange multiplier ($I^*=J(\mu^*)\geq J^*$) imply that there is no duality gap.

The following is a sufficient condition for the existence of Lagrange multiplier, which is also a proof of Theorem \ref{t004}.
\begin{lemma}\label{la06}
Let the assumptions \eqref{2030}--\eqref{2032} hold for the primal problem \eqref{0001}. 
Then there is no duality gap and there exists at least one Lagrange multiplier.
\end{lemma}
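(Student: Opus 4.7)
The plan is to follow the classical geometric form of Lagrangian duality: build a convex subset of $\mathbb{R}^2$ encoding the feasible perturbations of the primal problem, observe that $(0,-I^*)$ cannot lie in its interior, apply a finite-dimensional supporting-hyperplane theorem, and use the Slater condition \eqref{2032} to rule out a vertical separator.

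First I would record the convexity ingredients. Hypothesis \eqref{2030} makes $F$ concave in $(t,s)$, so $I(u,v)$ is concave on the vector space $X = C(U) \times C(V)$. Hypothesis \eqref{2031} makes $-\varphi(x,y,s)$ concave in $s$, so for each fixed $(x,y)$ the map $(u,v) \mapsto -\phi(x,y,u(x),v(y)) = -u(x)-\varphi(x,y,v(y))$ is concave in $(u,v)$, whence $\psi(u,v) = \inf_{U\times V}(-\phi)$ is concave as an infimum of concave functions. Thus both $-I$ and $-\psi$ are convex functionals on $X$.

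Second, I introduce the perturbation set
\[
C = \bigl\{(\eta,z)\in\mathbb{R}^2 : \exists\,(u,v)\in X \text{ with } -\psi(u,v)\leq\eta,\ -I(u,v)\leq z\bigr\}.
\]
Convexity of $-I$ and $-\psi$ on $X$ gives convexity of $C$, and the definition gives upward monotonicity: if $(\eta,z)\in C$ and $(\eta',z')\geq(\eta,z)$ componentwise, then $(\eta',z')\in C$. The point $(0,-I^*)$ cannot lie in $\mathrm{int}(C)$, for otherwise one could find $\varepsilon>0$ with $(-\varepsilon,-I^*-\varepsilon)\in C$, yielding a feasible pair with $I$-value strictly exceeding $I^*$. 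The Slater pair $(\bar u,\bar v)$ from \eqref{2032} ensures $C$ has nonempty interior (any quadrant based at $(-\psi(\bar u,\bar v),-I(\bar u,\bar v))$ is contained in $C$). A supporting-hyperplane theorem in $\mathbb{R}^2$ then yields $(\mu,\beta)\neq(0,0)$ with
\[
\beta z + \mu\eta \;\geq\; -\beta I^*\qquad \text{for all } (\eta,z)\in C,
\]
and upward monotonicity of $C$ forces $\mu\geq0$ and $\beta\geq0$.

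Third, I would invoke Slater once more to rule out $\beta=0$: the pair $(\bar u,\bar v)$ furnishes a point in $C$ with strictly negative first coordinate $-\psi(\bar u,\bar v)<0$, so $\beta=0$ would force $\mu(-\psi(\bar u,\bar v))\geq 0$ with $\mu>0$, a contradiction. Normalizing $\beta=1$ and applying the separating inequality to the canonical witnesses $(-\psi(u,v),-I(u,v))\in C$ yields $I(u,v)+\mu\psi(u,v)\leq I^*$ for every $(u,v)\in X$, i.e.\ $L(u,v,\mu)\leq I^*$. Taking the supremum over $X$ gives $J(\mu)\leq I^*$, which combined with weak duality $J^*\geq I^*$ from Lemma \ref{p203} forces $J(\mu)=J^*=I^*$. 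Hence there is no duality gap and $\mu$ is a Lagrange multiplier in the sense of Definition \ref{d301}. The main technical obstacle is justifying the separation step without knowing $C$ is closed; this is handled directly from the definition of $I^*$ as a supremum, which shows $(0,-I^*)\notin\mathrm{int}(C)$. The essential structural input is the convexity of $C$, which is precisely where \eqref{2030}--\eqref{2031} enter, while \eqref{2032} plays its standard dual role: guaranteeing $C$ has nonempty interior and preventing the separator from being vertical.
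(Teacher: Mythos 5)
Your proposal is correct and follows essentially the same route as the paper: the paper works with the set $A=\{(z,w):\exists\,(u,v)\in X,\ \psi(u,v)\geq z,\ I(u,v)\geq w\}$, which is just your set $C$ reflected through the origin, proves its convexity from \eqref{2030}--\eqref{2031}, notes $(0,I^*)$ is not interior, separates, uses monotonicity to get $\mu,\beta\geq0$ and the Slater pair \eqref{2032} to force $\beta>0$, and closes with weak duality from Lemma \ref{p203} exactly as you do. The only cosmetic difference is your sign convention and the (harmless, unneeded) remark that the Slater point gives $C$ nonempty interior.
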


\begin{proof}
Consider the subset of $\mathbb{R}^2$ given by
	\[\begin{split}
		A=\{(z,w)\,:\ \ &\exists\, (u,v)\in X \mbox{ such that }\\ 
			& \psi(u,v)\geq z,\quad I(u,v)\geq w\}.
	\end{split}\]
We first show that $A$ is convex. Let $(z,w)\in A$ and $(\tilde z,\tilde w)\in A$ be two different elements, we show that their convex combinations belong to $A$. 

The definition of $A$ implies that for some $(u,v)\in X$ and $(\tilde u,\tilde v)\in X$, we have
	\begin{eqnarray*}
		I(u,v)\geq w,\quad \psi(u,v)\geq z, \\
		I(\tilde u,\tilde v)\geq\tilde w,\quad \psi(\tilde u,\tilde v)\geq \tilde z.
	\end{eqnarray*}
For any $\alpha\in[0,1]$, by the concavity of $F$ in \eqref{2030}, we obtain
	\[\begin{split}
		I\left(\alpha u+(1-\alpha)\tilde u,\alpha v+(1-\alpha)\tilde v\right) &\geq \alpha I(u,v)+(1-\alpha)I(\tilde u,\tilde v) \\
		&\geq \alpha w+(1-\alpha)\tilde w. 
	\end{split}\]
By the convexity of $\varphi$ in \eqref{2031} and noting that $\inf_\Omega(f+h)\geq\inf_\Omega f+\inf_\Omega h$ for any $f,h\in C^0(\Omega)$, we obtain
	\[\begin{split}
		\psi\left(\alpha u+(1-\alpha)\tilde u,\alpha v+(1-\alpha)\tilde v\right) &\geq \alpha \psi(u,v)+(1-\alpha)\psi(\tilde u,\tilde v) \\
		&\geq \alpha z+(1-\alpha)\tilde z.
	\end{split}\]
Since the combination $\left(\alpha u+(1-\alpha)\tilde u,\alpha v+(1-\alpha)\tilde v\right)\in X$, the above equations imply that the convex combination of $(z,w)$ and $(\tilde z,\tilde w)$, i.e. $$\left(\alpha z+(1-\alpha)\tilde z,\alpha w+(1-\alpha)\tilde w\right),$$ belongs to $A$. This proves the convexity of $A$.

We next observe that $(0,I^*)$ is not an interior point of $A$; otherwise, the point $(0,I^*+\varepsilon)$ would belong to $A$ for some small $\varepsilon>0$, contradicting the definition of $I^*$ as the optimal primal value. 

Therefore, there exists a supporting hyperplane passing through $(0,I^*)$ and containing $A$ in one side. In particular, there exists a vector $(\mu,\beta)\neq(0,0)$ such that
	\begin{equation}\label{2033}
		\beta I^*\geq\mu z+\beta w,\qquad\forall\, (z,w)\in A.
	\end{equation}
We observe that if $(z,w)\in A$, then $(z,w-\gamma)\in A$ and $(z-\gamma,w)\in A$ for all $\gamma>0$. The inequality \eqref{2033} thus implies that 
	\begin{equation}\label{2034}
		\mu\geq0,\qquad \beta\geq0.
	\end{equation}

We now claim that $\beta>0$. If not, $\beta=0$ and from \eqref{2033} 
	\[0\geq\mu z,\qquad\forall\, (z,w)\in A.\]
By the assumption \eqref{2032}, there exists a pair $(\bar u,\bar v)\in X$ such that
	\[\psi(\bar u,\bar v)>0.\]
Since $(\psi(\bar u,\bar v),I(\bar u,\bar v))\in A$, we have
	\[0\geq\mu \psi(\bar u,\bar v),\]
which in view of $\mu\geq0$ in \eqref{2034} implies that $\mu=0$. This means, however, that $(\mu,\beta)=(0,0)$ arriving at a contradiction. 
Thus, we must have $\beta>0$ and by dividing if necessary the vector $(\mu,\beta)$ by $\beta$, we may assume that $\beta=1$. Note that 
	\[\left(\psi(u,v),I(u,v)\right)\in A,\qquad\forall\, (u,v)\in X.\]
Equation \eqref{2033} implies that
	\begin{equation}
		I^*\geq I(u,v)+\mu \psi(u,v),\qquad\forall\, (u,v)\in X.
	\end{equation}
	
Taking the supremum over $(u,v)\in X$ and using the fact $\mu\geq0$, we obtain
	\[\begin{split}
		I^*&\geq\sup_{(u,v)\in X} L(u,v,\mu) \\
		 &=J(\mu)\geq J^*,
	\end{split}\]
where $J^*$ is the optimal dual value. By Lemma \ref{p203} we have the equalities hold above, namely $\mu$ is a Lagrange multiplier and there is no duality gap.
\end{proof}

\section{Examples and applications}\label{s5}

In this section we present some interesting examples and applications of the nonlinear optimisation \eqref{prob}.

\subsection{Optimal transportation}\label{ss51}

Let $U,V$ be two bounded domains in $\mathbb{R}^n$, and $c\in C^4(U\times V)$ be a cost function. 
Let $f,g$ be two positive densities supported on $U,V$, respectively, satisfying the mass balance condition
  \begin{equation}\label{otmb}
  	\int_Uf=\int_{V}g.
  \end{equation}
The optimal transport problem is to find a measure preserving mapping $T_0 : U\to V$ minimising the cost functional
  \[\mathcal{C}(T)=\int_U c(x,T(x))f(x)dx\]
among all measure preserving mappings $T :U\to V$, (see Definition \ref{mpmap}).
Denote the set of measure preserving mappings by $\mathcal{T}$.

Kantorovich introduced a dual functional
  \begin{equation}\label{ex01}
    I(u,v)=\int_U u(x)f(x)dx+\int_{V}v(y)g(y)dy
  \end{equation}                                                                               
over the set
  \begin{equation}\label{ex02}
    K=\{(u,v)\in C(U)\times C(V)\,:\,u(x)+v(y)\leq c(x,y),\quad\forall\, x\in U, y\in V\}.
  \end{equation}
Under suitable conditions, one can prove that
  \[\inf_{T\in\mathcal{T}}\mathcal{C}(T)=\sup_{(u,v)\in K}I(u,v).\]
The reader is referred to \cite{Am,Caf,E,GM1,MTW,U,V} for further discussion on the optimal transport problem. 

Note that \eqref{ex01}--\eqref{ex02} is a linear case of \eqref{0001}--\eqref{prob}. In the form of \eqref{0001new}, one can set
	\begin{eqnarray*}
		F(x,y,u,v) \!\!&=&\!\! u(x)f(x)+v(y)g(y),\quad d\gamma=dx\otimes dy,\\
		\phi(x,y,u,v) \!\!&=&\!\! u(x)+v(y)-c(x,y),
	\end{eqnarray*}
or equivalently, $\varphi(x,y,v)=v-c(x,y)$ in \eqref{ne01} and $\varphi^*(x,y,u)=u-c(x,y)$ in \eqref{gs02}.
All the hypotheses in Theorem \ref{t003} are satisfied when the cost function $c(x,y)$ satisfies the following conditions \cite{MTW}:
	\begin{itemize}
		\item[(A1)] For any $x,p\in\mathbb{R}^n$, there is a unique $y\in\mathbb{R}^n$ such that $D_xc(x,y)=p$; and for any $y,q\in\mathbb{R}^n$, there is a unique $x\in\mathbb{R}^n$ such that $D_yc(x,y)=q$.
	\end{itemize}
The hypotheses in Theorem \ref{t003} follow from the constructions of $F,\phi$ together with $f>0,g>0$. The mass balance condition \eqref{otmb} implies \eqref{ne06}. The condition (A1) implies both ($H_1$) and ($H_1^*$).

Therefore, by Theorem \ref{t003} we have the existence of potentials $(u,v)$ and optimal mapping $T$. 
The existence of potentials in optimal transportation was previously proved in \cite{Bre,Caf,GM1}.
By directly applying the formula \eqref{2pde}, one obtains the optimal transportation equation
  \begin{equation}\label{ote}
   \left|\det\,[D^2u-D^2_{xx}c]\right|=|\det\,c_{x,y}|\frac{f}{g}.
  \end{equation}

We remark that in the linear case \eqref{ex01}--\eqref{ex02}, both $F$ in \eqref{0001} and $\phi$ in \eqref{0003} are linear in $t,s$ variables, that is a border situation of $F$ being concave and $\phi$ being convex in $t,s$, simultaneously. 

There are numerous applicatons of the optimal transportation. Here we mention two important ones in geometric optics. In \cite{W04}, Xu-Jia Wang showed that the far field reflector problem is an optimal transport problem, and so is a linear optimisation problem. The associated cost function $c(x,y)=-\log(1-x\cdot y)$, where $x, y$ are points on the unit sphere $\mathbb{S}^2$. 
Later on in \cite{GH1} Guti\'errez and Huang showed that the far field refractor problem is also an optimal transport problem. Let $\kappa$ be the refractor index from the initial media to the target media. Then the associated cost function $c(x,y)=-\log(1-\kappa x\cdot y)$ when $\kappa<1$; and $c(x,y)=\log(\kappa x\cdot y-1)$ when $\kappa>1$, where $x,y$ are points on the unit sphere $\mathbb{S}^n$. 
In the following subsections, we will consider more general (near field) reflector and refractor problems and show that they are in the class of the nonlinear optimisation \eqref{0001}--\eqref{prob}.

\subsection{Near field reflector problem with point source}\label{ss52}

In \cite{Liu} it is proved that the near field reflector problem is a nonlienar optimisation. For the convenience of the reader, we summaries the arguments as follows.  
Assume that the light emits from the origin $O$ and passes through $\Omega\subset\mathbb{S}^n$ with a positive density $f\in L^1(\Omega)$. 
After being reflected from a surface $\Gamma$, the light will illuminate the target surface $\Omega^*$ in $\mathbb{R}^{n+1}$ with a prescribed positive density $g\in L^1(\Omega^*)$. 
Assume the energy conservation condition
  \begin{equation}\label{a001}
   \int_\Omega f=\int_{\Omega^*}g.
  \end{equation}

Represent the reflector $\Gamma$ in polar coordinate system as
	\[\Gamma_\rho=\{X\rho(X)\,:\,X\in\Omega)\},\]
where $\rho$ is a positive function. 
Recall that \cite{KW}, $\Gamma_\rho$ is \emph{admissible} if at each point $X\rho(X)\in\Gamma$ there exists a \emph{supporting ellipsoid}. Therefore, the radial function $\rho$ satisfies
	\begin{equation}\label{419a}
		\rho(X)=\inf_{Y\in\Omega^*}\frac{p(Y)}{1-\epsilon(p(Y))\langle X,\frac{Y}{|Y|}\rangle},\quad X\in\Omega,
	\end{equation}
where $p$ is the focal function on $\Omega^*$ and $\epsilon(p)=\sqrt{1+p^2/|Y|^2}-p/|Y|$ is the eccentricity.
Because there is an ellipsoid $E_{Y,p(Y)}$ supporting to $\Gamma_\rho$ for each $Y\in\Omega^*$, we also have
	\begin{equation}\label{420a}
		p(Y)=\sup_{X\in\Omega}\rho(X)\left[1-\epsilon(p(Y))\langle X,\frac{Y}{|Y|}\rangle\right],\quad Y\in\Omega^*.
	\end{equation}
Note that in \eqref{419a} for each $X\in\Omega$ the infimum is achieved at some $Y\in\Omega^*$ and in \eqref{420a} for each $Y\in\Omega^*$ the supremum is achieved at some $X\in\Omega$.

The relations \eqref{419a}--\eqref{420a} between the radial and focal functions of a reflector $\Gamma$ are analogous to the classical relations between the radial and support functions for convex bodies, for example, see \cite{Sch}. Inspired by that and \cite{W04}, we set $\eta=1/p$. Then the pair $(\rho,\eta)$ satisfies the dual relation
	\begin{eqnarray}\label{421a}
		\rho(X)\!\!&=&\!\!\inf_{Y\in\Omega^*}\frac{1}{\eta(Y)\left(1-\epsilon(\eta(Y))\langle X,\frac{Y}{|Y|}\rangle\right)}, \\
		\eta(Y)\!\!&=&\!\!\inf_{X\in\Omega}\frac{1}{\rho(X)\left(1-\epsilon(\eta(Y))\langle X,\frac{Y}{|Y|}\rangle\right)}. \nonumber
	\end{eqnarray}

Similarly to \cite{W04}, we can now formulate the reflector problem to a nonlinear optimisation \eqref{0001}--\eqref{prob} as follows. 
Let $u=\log\rho$ and $v=\log\eta$. Set the functional
	\begin{equation}\label{2029b}
		I(u,v)=\int_{\Omega}f(X)u + \int_{\Omega^*}g(Y)\left(v+\log (1-\frac{\langle T^{-1}Y,Y\rangle}{e^{-v}+\sqrt{|Y|^2+e^{-2v}}})\right),
	\end{equation}
and the constraint set
	\[K=\left\{(u,v)\in C(\Omega)\times C(\Omega^*)\,:\,\phi(X,Y,u,v)\leq0\right\},\]
with the constraint function
	\begin{equation}\label{2030b}
		\phi(X,Y,u,v)=u+v+\log\left(1-\frac{\langle X,Y\rangle}{e^{-v}+\sqrt{|Y|^2+e^{-2v}}}\right).
	\end{equation}

We assume a further condition on domains $\Omega$ and $\Omega^*$: $\Omega^*$ is contained in the cone $\mathcal{C}_V=\{tX\,:\,t>0,X\in V\}$ for a domain $V\subset\mathbb{S}^n$ and
	\begin{equation}\label{ah01}
		\overline{\Omega}\cap\overline{V}=\emptyset,
	\end{equation}
where $\overline{\Omega}$ and $\overline{V}$ denote the closure of $\Omega$ and $V$. It implies that there exists a small constant $\delta_0>0$, such that for any $X\in\Omega, Y\in\Omega^*$,
	\begin{equation}\label{rp01}
		-1\leq\langle X,\frac{Y}{|Y|}\rangle\leq1-\delta_0.
	\end{equation}
Under the assumption \eqref{rp01}, one can verify that \eqref{2004} is satisfied by \eqref{2029b} and \eqref{2030b}.
The energy conservation condition \eqref{a001} implies \eqref{ne06}.
To show the condition ($H_1$), let $(u,v)\in K$ be a dual maximising pair of $I$, (see Theorem \ref{t003}). If there holds
	\[\varphi_x(X,Y,v(Y))=-Du(X),\quad \varphi(X,Y,v(Y))=-u(X),\]
at $X\in\Omega, Y\in\Omega^*$, it was proved \cite{Liu} that $Y$ is the target point of light emitting along $X$, reflected at $Xe^{u(X)}\in\Gamma$ with unit normal
	\[\gamma=\frac{(Du,0)-(1+Du\cdot x)X}{\sqrt{1+|Du|^2-(Du\cdot x)^2}}.\]
By the reflection law, $Y_r=X-2\langle X,\gamma\rangle\gamma$,
	\[Y=Xe^{u(X)}+Y_r|Y-Xe^{u(X)}|\]
is uniquely determined. It implies that $(\varphi_x,\varphi)(X,\cdot,\cdot)$ is one-to-one in $\Omega^*\times v(\Omega^*)$ for each $X\in\Omega$, see Remark \ref{rrr1}.

As a consequence of Theorem \ref{t003}, we have \cite{Liu}
\begin{corollary}\label{t101}
Assume that $f, g$ satisfy \eqref{a001}. Suppose that $\Omega$ and $\Omega^*$ satisfy \eqref{ah01}.
Then there is a dual maximising pair $(u,v)\in K$ satisfing
	\begin{equation*}
		I(u,v)=\sup_{(u,v)\in K}I(u,v),
	\end{equation*} 
where $I(u,v)$ is in \eqref{2029b}, and the constraint $\phi$ is in \eqref{2030b}.
Moreover, $\rho=e^u$ is a solution of the reflector problem with given densities $(\Omega, f)$ and $(\Omega^*, g)$.
(Note that the solutions need to be understood as generalised solutions.)
\end{corollary}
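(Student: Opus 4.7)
My plan is to treat the corollary as a direct application of Theorem \ref{t003} to the reflector setup of \eqref{2029b}--\eqref{2030b}, after recording the verifications of the hypotheses already sketched in the preceding paragraphs, and then to translate the abstract potential $u$ into the physical reflector via $\rho=e^u$.

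First I would collect the hypothesis checks. The separation assumption \eqref{ah01} yields the uniform bound \eqref{rp01}, which keeps the logarithm in \eqref{2030b} smooth and bounded away from its singularity $1-\langle X,Y/|Y|\rangle=0$. A direct differentiation of \eqref{2030b} then verifies the monotonicity \eqref{2003}--\eqref{2004}. Since $F$ in \eqref{2029b} has the special form \eqref{ne05} in the variables $u$ and $\varphi(X,Y,v)$ with marginals $f\,dX$ and $g\,dY$, the mixed condition \eqref{ne04} collapses to the balance \eqref{ne06}, which is exactly the energy conservation \eqref{a001}. Condition (H1) is the reflection-law argument outlined just before the statement: the equations $\varphi_X(X,Y,s)=-p$ and $\varphi(X,Y,s)=-t$ determine the reflected ray from the surface point $Xe^{t}$ with normal fixed by $(p,t)$, and then the eccentricity relation forces $s$, so $(Y,s)$ is unique on the relevant projection (in the sense of Remark \ref{rrr1}).

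The only hypothesis not discussed explicitly in the text is the non-degeneracy (H2). I would verify it by a direct computation from \eqref{2030b}: after differentiation the matrix $\varphi_{XY}-\varphi_s^{-1}\varphi_{Xs}\otimes\varphi_Y$ factors as a positive scalar (coming from derivatives of the logarithm, positive by \eqref{rp01}) times a matrix whose non-vanishing determinant encodes the non-degeneracy of the family of confocal ellipsoids sharing the origin as a focus. I expect this to be the main technical obstacle of the proof, but it is the reflector analogue of the MTW condition (A2) for a cost function and is essentially known in the literature (see \cite{KW} and the discussion in \cite{Liu}).

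Once all hypotheses of Theorem \ref{t003} are in place, the theorem produces a dual maximizing pair $(u,v)\in K$ together with a unique optimal mapping $T:\Omega\to\Omega^*$ defined almost everywhere. Setting $\rho=e^u$ and $\eta=e^v$, the dual relation \eqref{s110} under the constraint \eqref{2030b} becomes precisely \eqref{421a}, so $\Gamma_\rho$ is an admissible reflector and $T(X)$ is the target point of the reflected ray emanating along $X$. Because $F$ has the form \eqref{ne05}, Corollary \ref{cc22} gives the measure-preserving identity \eqref{ne10} for $T$, which is the physical statement that $\Gamma_\rho$ sends the incident density $f$ on $\Omega$ onto the target density $g$ on $\Omega^*$. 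Combined with Theorem \ref{the1}, this shows that $\rho=e^u$ is a generalized solution of the near-field reflector problem, completing the proof.
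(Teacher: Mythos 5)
Your proposal follows the paper's own route: verify \eqref{2003}--\eqref{2004} from \eqref{rp01} (which comes from \eqref{ah01}), reduce \eqref{ne04} to the balance condition \eqref{ne06}, i.e.\ \eqref{a001}, check (H1) in the restricted sense of Remark \ref{rrr1} via the reflection law, then invoke Theorem \ref{t003} together with Corollary \ref{cc22} and Theorem \ref{the1} and set $\rho=e^u$. One clarification on the point you single out as ``the main technical obstacle'': the paper never verifies (H2) for this example, and it does not need to for the conclusions of the corollary --- the existence of a dual maximizing pair comes from Lemma \ref{p201} (only \eqref{2003}--\eqref{2004} and \eqref{ne04}), the a.e.\ defined optimal mapping from Lemma \ref{p202} (only (H1)), and the generalized-solution statement from Lemma \ref{le11}/Corollary \ref{cc22} and Theorem \ref{the1}; (H2) enters only to guarantee non-degeneracy of the derived Monge--Amp\`ere equation \eqref{a010}, which is not part of the corollary's claim. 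So your heuristic sketch of (H2) is not a gap here, but if you do want the non-degenerate equation you would have to carry out that computation in full rather than assert the expected factorization.
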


By directly applying the formula \eqref{2pde}, we also obtain the PDE in the near field case \cite{Liu}, which was previously obtained by Karakhanyan and Wang in \cite{KW}.
Assume that $\Omega^*$ is given implicitly by
	\begin{equation}\label{a005}
		\Omega^*=\{Z\in\mathbb{R}^{n+1}\,:\,\psi(Z)=0\}.
	\end{equation}
Suppose that $\Omega$ is a subset of upper unit sphere $\mathbb{S}^n_+=\mathbb{S}^n\cap\{x_{n+1}>0\}$. Let $X=(x,x_{n+1})$ be a parameterisation of $\Omega$, where $x_{n+1}=\sqrt{1-|x|^2}=:\omega(x)$, and $x=(x_1,\cdots,x_n)$. For simplification, we define some auxiliary functions
	\begin{eqnarray}
		a \!\!&=&\!\! |D\rho|^2-(\rho+D\rho\cdot x)^2, \label{a006}\\
		b \!\!&=&\!\! |D\rho|^2+\rho^2-(D\rho\cdot x)^2, \label{a007}\\
		t \!\!&=&\!\! \frac{\rho x_{n+1}-y_{n+1}}{\rho x_{n+1}},\quad \beta=\frac{t}{(Y-X\rho)\cdot\nabla\psi}, \label{a008}
	\end{eqnarray}
and denote the matrix
	\begin{equation}\label{a009}
		\mathcal{N}=\{\mathcal{N}_{ij}\},\quad \mathcal{N}_{ij}=\delta_{ij}+\frac{x_ix_j}{1-|x|^2}.
	\end{equation}
By computing in the local orthonormal frame, we obtain the equation as follows	
\begin{corollary}\label{t102}
The function $\rho$ is a solution of
	\begin{equation}\label{a010}
		\left|\det\,\left[D^2\rho-\frac{2}{\rho}D\rho\otimes D\rho-\frac{a(1-t)}{2t\rho}\mathcal{N}\right]\right|=\left|\frac{a^{n+1}}{t^nb\beta}\right|\frac{f}{2^n\rho^{2n+1}\omega^2g|\nabla\psi|}.
	\end{equation}
\end{corollary}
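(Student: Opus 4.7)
The plan is to specialize the general Monge--Amp\`ere type equation \eqref{2pde} to the reflector constraint \eqref{2030b} and then translate the result from the unknown $u=\log\rho$ back to $\rho$, taking into account that $\Omega\subset\mathbb{S}^n_+$ is parametrized by $x\in\mathbb{R}^n$ via $X=(x,\omega(x))$ with $\omega(x)=\sqrt{1-|x|^2}$. Concretely, by Corollary \ref{t101} and Theorem \ref{the1}, the potential $u$ is a generalized solution of \eqref{2pde} with $\varphi(X,Y,v)=v+\log\bigl(1-\langle X,Y\rangle/(e^{-v}+\sqrt{|Y|^2+e^{-2v}})\bigr)$, so it remains to compute the four quantities $\varphi_{xx}$, $\varphi_{xy}-\varphi_s^{-1}\varphi_{xs}\otimes\varphi_y$, together with the Jacobian change-of-variables factor encoded in $f/(g\circ T)$, and recast them in terms of $\rho,D\rho,D^2\rho$.

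First I would work in a local orthonormal frame on $\mathbb{S}^n$ at $X$ so that the induced Euclidean second derivatives of $u$ pick up the spherical metric correction; this is the origin of the matrix $\mathcal{N}_{ij}=\delta_{ij}+x_ix_j/(1-|x|^2)$ in \eqref{a009}, which is precisely $g_{ij}^{-1}$ inherited from restricting the ambient flat metric to the graph $x_{n+1}=\omega(x)$. With $u=\log\rho$, a direct differentiation gives $Du=D\rho/\rho$ and $D^2u=D^2\rho/\rho-D\rho\otimes D\rho/\rho^2$. Next I would differentiate the reflector equation $\phi(X,Y,u,v)=0$ in $X$ along the sphere to identify, via the computation recalled in \S\ref{ss52}, the matrix $\varphi_{xx}$ in terms of $\rho$, $D\rho$, and the geometric quantity $a$ from \eqref{a006}; the point-source reflection geometry together with \eqref{a008} leads to the diagonal contribution $-\tfrac{a(1-t)}{2t\rho}\mathcal{N}$ appearing on the left-hand side of \eqref{a010}. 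After combining with the $D^2u$ formula and clearing the factor $1/\rho$, the bracket $D^2\rho-(2/\rho)D\rho\otimes D\rho-\tfrac{a(1-t)}{2t\rho}\mathcal{N}$ emerges exactly as stated.

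For the right-hand side I would compute $\varphi_s$ from \eqref{2030b} by direct differentiation in $v$, then $\varphi_y$ by differentiating in $Y$; applying (H2) the mixed expression $\varphi_{xy}-\varphi_s^{-1}\varphi_{xs}\otimes\varphi_y$ can be organized using the auxiliary quantities $a, b, t, \beta$ in \eqref{a006}--\eqref{a008}, because these are precisely the invariants that arise when one expresses the reflector map $Y=Y(X,\rho,D\rho)$ via the reflection law recalled after \eqref{rp01}. The determinant of this mixed matrix contributes the prefactor $a^{n+1}/(t^n b\beta)$; the extra powers of $\rho$, of $\omega$ and the factor $2^n$ come from converting the orthonormal-frame volume on $\mathbb{S}^n$ to Lebesgue measure $dx$ (producing $\omega^{-1}$ and the usual $\rho^{2n+1}$ scaling of the reflector Jacobian), and from re-expressing the target density $g$ on the hypersurface $\{\psi=0\}$ using the co-area factor $|\nabla\psi|$ and the projection factor $\beta$ relating the reflected-ray intersection with $\Omega^\ast$ to the spherical measure.

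The main obstacle is purely computational: carefully differentiating \eqref{2030b} and identifying the resulting tensors with the geometric quantities $a,b,t,\beta$ and the spherical correction $\mathcal{N}$. There is no new analytic input beyond Theorem \ref{the1} and conditions (H1)--(H2) (verified for the reflector constraint in \S\ref{ss52}); the regularity assumption needed to differentiate twice is supplied by Corollary \ref{t101} interpreted in the generalized sense, and once the derivation is carried out in the smooth setting it extends to generalized solutions by the measure-theoretic framework of Section \ref{s3}. Assembling the left-hand bracket, the right-hand bracket, the Jacobian factor, and the density ratio $f/g$ then yields \eqref{a010}.
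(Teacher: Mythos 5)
Your plan coincides with the paper's own derivation: the paper obtains \eqref{a010} exactly by specializing the general equation \eqref{2pde} to the reflector constraint \eqref{2030b} with $u=\log\rho$, parametrizing $\Omega\subset\mathbb{S}^n_+$ by $X=(x,\omega(x))$ and computing in a local orthonormal frame, with the detailed computation of the auxiliary quantities $a,b,t,\beta$ deferred to \cite{Liu} and \cite{KW}. One minor slip in your commentary: $\mathcal{N}_{ij}=\delta_{ij}+x_ix_j/(1-|x|^2)$ is the induced metric $g_{ij}$ of this graph parametrization, not its inverse (which is $\delta_{ij}-x_ix_j$), but this aside does not affect the structure or validity of the approach.
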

Note that the matrix in equation \eqref{a010} has a different sign to that in \cite{KW}, since we calculate the absolute value of the determinant.

\subsection{Near field reflector problem with parallel source}\label{ss53}

The ideal reflection system can be described as follows: a parallel light emits from $\Omega\subset\mathbb{R}^n\times\{0\}$ along $e_{n+1}=(0,\cdots,0,1)$ with a positive density $f\in L^1(\Omega)$. After being reflected by the surface $\Gamma$ in $\mathbb{R}^{n+1}$, it will illuminate the target domain $\Omega^*\subset\mathbb{R}^n\times\{0\}$ with the prescribed density $g\in L^1(\Omega^*)$. 
Assume the energy conservation condition
	\begin{equation}\label{rpp1}
		\int_\Omega f=\int_{\Omega^*}g.
	\end{equation}
We represent the reflector $\Gamma$ as graph $u|_\Omega$, namely
	\[\Gamma_u=\{(x,u(x))\,:\,x\in\Omega\},\]
where $u$ is a positive function.

Consider the \emph{``inverse" paraboloid} with focus at $y\in\Omega^*$ and the axial direction $-e_{n+1}$.
The reflection property of these paraboloids is that: all the incident light from parallel source along the direction $e_{n+1}$ will be reflected by the inverse paraboloids to the focus points $y$.
Such an inverse paraboloid can be represented by $\Gamma_p=\{(x,p(x))\,:\,x\in\mathbb{R}^n\}$ with
	\begin{equation}\label{2060}
		p(x)=p_{y}(x)=\frac{1}{2v}-\frac{v}{2}|x-y|^2,
	\end{equation}
where $v>0$ is a constant depending on $y$ such that $p(x)>0$ in $\Omega$. 

If we regard $v=v(y)$ as a function on $\Omega^*$, we then have a family of paraboloids $\{\Gamma_{p_y}\,:\,y\in\Omega^*\}$. It is natural to construct the reflector $\Gamma_u$ by the envelope of the family of paraboloids $\{\Gamma_{p_y}\,:\,y\in\Omega^*\}$. 

In an ideal system, at each point $(x,u(x))\in\Gamma_u$ on an \emph{admissible} reflector $\Gamma_u$ there exists a \emph{supporting paraboloid}, namely for some $y\in\Omega^*$
	\[u(x)=\frac{1}{2v(y)}-\frac{v(y)}{2}|x-y|^2,\quad\mbox{and}\quad u(x')\leq\frac{1}{2v(y)}-\frac{v(y)}{2}|x'-y|^2\ \ \forall\, x'\in\Omega.\]
Hence, the defining function $u$ and the dual function $v$ satisfy the dual relation:
	\begin{eqnarray}
		u(x)\!\!&=&\!\!\inf_{y\in\Omega^*}\left\{\frac{1}{2v(y)}-\frac{v(y)}{2}|x-y|^2\right\},\quad x\in\Omega, \label{2061}\\
		v(y)\!\!&=&\!\!\inf_{x\in\Omega}\left\{\frac{-u+\sqrt{u^2+|x-y|^2}}{|x-y|^2}\right\},\quad y\in\Omega^*. \label{2062}
	\end{eqnarray}
Note that in \eqref{2061}, if for $x\in\Omega$ the infimum is achieved at some $y=T(x)\in\Omega^*$, then in \eqref{2062} at $y=T(x)$ the infimum is achieved at $x\in\Omega$. By \eqref{2061} one has
	\[Du(x)=-v(x-y),\]
and thus
	\[|Du(x)|^2=1-2vu.\]
Since $u>0,v>0$, we have the natural restriction $|Du|<1$ on $\Omega$.	

Based on \eqref{2061}--\eqref{2062} we can now formulate this problem to a nonlinear optimisation problem \eqref{0001}--\eqref{prob}.
Set the functional 
	\begin{equation}\label{2063}
		I(u,v)=\int_{\Omega} f(x)u(x) + \int_{\Omega^*}g(y)\left(\frac{v}{2}|T^{-1}y-y|^2-\frac{1}{2v}\right),
	\end{equation}
and the constraint set
	\[\mathcal{K}=\{(u,v)\in C(\Omega)\times C(\Omega^*)\,:\,\phi(x,y,u,v)\leq0\},\]
with the constraint function
	\begin{equation}\label{2064}
		\phi(x,y,u,v)=u-\frac{1}{2v}+\frac{v}{2}|x-y|^2.
	\end{equation}
Note that $\phi$ can be written as \eqref{ne01} with $\varphi(x,y,v)=-\frac{1}{2v}+\frac{v}{2}|x-y|^2$. By differentiating, we have
	\begin{eqnarray}
		&&\varphi_x=(x-y)v,\quad \varphi_y=-(x-y)v,\quad \varphi_{xx}=vI,\quad \varphi_{xy}=-vI,  \label{2065}\\
		&&\varphi_s=\frac{1}{2v^2}+\frac{1}{2}|x-y|^2,\quad \varphi_{xs}=x-y,\quad etc. \nonumber
	\end{eqnarray}
From Theorem \ref{t003} there exists a dual maximising pair $(u,v)\in K$ of $I$. From \eqref{2065} one can see that $(\varphi_x,\varphi)(x,\cdot,\cdot)$ is one-to-one in $\Omega^*\times v(\Omega^*)$ for each $x\in\Omega$, Remark \ref{rrr1}. The energy conservation condition \eqref{rpp1} implies \eqref{ne06}.
Alternatively, one can directly verify that the optimal mapping $T$ associated to $(u,v)$, determined by \eqref{2014}, is equal to the reflection mapping. This implies the condition ($H_1$) due to the reflection law. 

\begin{proposition}\label{p222}
Let $(u,v)\in K$ be a dual maximising pair of $I$, $|Du|<1$.
The associated optimal mapping $T$ determined by \eqref{2014} is equal to the reflection mapping $T_r$ obtained by the reflection law.
\end{proposition}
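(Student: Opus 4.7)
The plan is to verify $T=T_r$ by computing both mappings explicitly in coordinates and checking they agree.

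First I would unpack equation \eqref{2014} with the constraint $\varphi(x,y,s) = -\tfrac{1}{2s} + \tfrac{s}{2}|x-y|^2$ of this subsection. Since $\varphi_x(x,y,s) = s(x-y)$, the relation $\varphi_x(x, T(x), v(T(x))) + Du(x) = 0$ rearranges to
\[T(x) - x = \frac{Du(x)}{v(T(x))}.\]
By Definition \ref{dd22} we also have $\phi(x, T(x), u(x), v(T(x))) = 0$, i.e.\ $u(x) = \tfrac{1}{2v(T(x))} - \tfrac{v(T(x))}{2}|x-T(x)|^2$. Substituting the previous display yields $|x-T(x)|^2 = |Du(x)|^2/v(T(x))^2$, and solving for $v(T(x))$ gives $v(T(x)) = (1-|Du(x)|^2)/(2u(x))$, whence
\[T(x) = x + \frac{2u(x)\,Du(x)}{1-|Du(x)|^2}.\]
The hypothesis $|Du|<1$ ensures $v(T(x))>0$, consistent with the admissibility of $\Gamma_u$ as an envelope of inverse paraboloids.

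Second, I would compute $T_r$ directly from the law of reflection on the graph $\Gamma_u$. The unit normal to $\Gamma_u$ at $(x,u(x))$ is $\nu = (-Du(x), 1)/\sqrt{1+|Du(x)|^2}$, so the incident direction $e_{n+1}$ reflects to
\[r = e_{n+1} - 2\langle e_{n+1},\nu\rangle\nu = \left(\frac{2Du(x)}{1+|Du(x)|^2},\,\frac{|Du(x)|^2-1}{1+|Du(x)|^2}\right),\]
whose last component is negative since $|Du|<1$. Travelling along $r$ from $(x,u(x))$ until the $z$-coordinate vanishes requires parameter $t = u(x)(1+|Du(x)|^2)/(1-|Du(x)|^2)$, producing
\[T_r(x) = x + \frac{2u(x)\,Du(x)}{1 - |Du(x)|^2}.\]
Comparing the two explicit formulas yields $T = T_r$.

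I do not anticipate any serious obstacle: the argument is purely algebraic once both mappings are written down. The only nontrivial input is the equality $\phi(x,T(x),u(x),v(T(x))) = 0$, which geometrically encodes the tangency of the paraboloid $p_{T(x)}$ to $\Gamma_u$ at $(x,u(x))$; this tangency together with the classical focal property of paraboloids of revolution is what ultimately forces the analytic and geometric mappings to coincide, so an alternative presentation is to simply invoke the supporting-paraboloid characterisation directly. The small care needed is in signs: the incident ray rises, the reflector lies above $\Omega$, and the reflected ray must descend, which is guaranteed exactly by $|Du|<1$.
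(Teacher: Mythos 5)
Your proposal is correct and follows essentially the same route as the paper: derive $T(x)=x+\frac{2u\,Du}{1-|Du|^2}$ from \eqref{2014} together with $\phi(x,Tx,u,v(Tx))=0$, then compute the reflected direction from the unit normal of $\Gamma_u$ and intersect with the target plane (the paper equates unit direction vectors via \eqref{2801}--\eqref{2802} rather than tracing the ray parametrically, but this is the same computation). The sign choice of the normal and the role of $|Du|<1$ are handled correctly.
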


\begin{proof}
From \eqref{2014} and \eqref{2065}, at $(x,y)=(x,T(x))$ we have
	\begin{equation}\label{2067}
		x-y=-\frac{Du}{v}.	
	\end{equation}
From \eqref{as90new}, $u-\frac{1}{2v}+\frac{v}{2}|x-y|^2=0$, thus 
	\begin{equation}\label{2068}
		v=\frac{1-|Du|^2}{2u}.
	\end{equation}
Note that $u>0, v>0$ and $|Du|<1$. Combining \eqref{2067} and \eqref{2068}, we obtain
	\begin{equation}\label{2069}
		x-y=-\frac{2uDu}{1-|Du|^2}.
	\end{equation}
This implies the optimal mapping $T$ is given by
	\[T(x)=x+\frac{2uDu}{1-|Du|^2}.\]
	
Next, we calculate the reflection mapping $T_r$.
At point $(x,u(x))\in\Gamma_u$, from direct calculations the unit normal is
	\[\gamma=\frac{(Du,-1)}{\sqrt{1+|Du|^2}}.\]
By the reflection law, we have the reflected direction
	\begin{equation}\label{2801}
		Y_r=e_{n+1}-2\langle e_{n+1},\gamma\rangle\gamma=\frac{(2Du,|Du|^2-1)}{|Du|^2+1}.
	\end{equation}
On the other hand, since the reflected ray meets the hyperplane $\mathbb{R}^n\times\{0\}$, we have
	\begin{equation}\label{2802}
		Y_r=\frac{(y-x,-u)}{\sqrt{|y-x|^2+u^2}}.
	\end{equation}
From \eqref{2801}--\eqref{2802} we obtain that
	\begin{equation}\label{2803}
		y=T_r(x)=x+\frac{2uDu}{1-|Du|^2}=T(x).
	\end{equation}
\end{proof}

Therefore, as a consequence of Theorem \ref{t003}, we have
\begin{corollary}
Assume that $f,g$ satisfy the energy conservation condition \eqref{rpp1}. 
Then there is a dual maximising pair $(u,v)\in K$ of $I$, where $I, K$ are in \eqref{2063}, \eqref{2064}. 
Moreover, $u$ is a (generalised) solution of the reflector problem with given densities $(\Omega,f)$ and $(\Omega^*,g)$.
\end{corollary}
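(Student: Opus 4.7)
The plan is to derive this corollary as a direct application of Theorem \ref{t003} (in the form stated for $F$ of type \eqref{ne05}, since here $F(x,y,u,v) = f(x)u + g(y)\varphi(x,y,v)$) to the specific $F$ and $\varphi$ of \eqref{2063}--\eqref{2064}, and then to invoke Proposition \ref{p222} to translate the abstract optimal mapping into the physical reflection mapping. The substance of the proof is thus checking the four groups of hypotheses: \eqref{2003}--\eqref{2004}, (H1), (H2), and the balance condition \eqref{ne04}, all of which are routine but instructive for this example.

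First I would verify monotonicity. Using \eqref{2065}, $F_t = f(x) \ge 0$ and $F_s = g(y)\varphi_s \ge 0$ follow from positivity of $f, g$ together with $\varphi_s = \tfrac{1}{2v^2} + \tfrac12|x-y|^2 > 0$. The uniform lower bound $\varphi_s \ge \theta_0$ in \eqref{2004} is obtained by working in a class of admissible reflectors where $v$ is bounded above (equivalently, where the focal function $1/v$ is bounded below), which is the restricted setting of Remark \ref{rrr1}. For (H1), fix $x_0$ and data $(p,t)$; from \eqref{2065} the equations $\varphi_x(x_0,y,s) = -p$ and $\varphi(x_0,y,s) = -t$ become $s(x_0-y) = -p$ and $\tfrac{1}{2s} - \tfrac{s}{2}|x_0-y|^2 = -t$, so $y = x_0 + p/s$ and substituting gives a single scalar equation in $s > 0$ with a unique solution, as is consistent with the reflection identification in Proposition \ref{p222}.

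Next I would check (H2). From \eqref{2065},
\begin{equation*}
\varphi_{xy} - \tfrac{1}{\varphi_s}\,\varphi_{xs}\otimes\varphi_y \;=\; -vI + \tfrac{v}{\varphi_s}(x-y)\otimes(x-y),
\end{equation*}
a rank-one perturbation of $-vI$; by the matrix determinant lemma its determinant equals
\begin{equation*}
(-v)^n\!\left(1 - \tfrac{|x-y|^2}{\varphi_s}\right) \;=\; (-v)^n\cdot \tfrac{1/(2v^2)}{\varphi_s} \;\neq\; 0,
\end{equation*}
so (H2) holds. Finally, since $F$ is in the special form \eqref{ne05}, the mixed condition \eqref{ne04} collapses (as discussed after \eqref{ne05}) to the balance condition $\int_\Omega f = \int_{\Omega^*} g$, which is precisely the hypothesis \eqref{rpp1}.

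With all hypotheses in place, Theorem \ref{t003} produces a dual maximizing pair $(u,v) \in K$ and an associated optimal mapping $T$, uniquely determined almost everywhere, satisfying $\phi(x, Tx, u(x), v(Tx)) = 0$. By Proposition \ref{p222}, this $T$ coincides with the physical reflection mapping $T_r$ off the graph $\Gamma_u$, and by Corollary \ref{cc22} it is measure-preserving in the sense of \eqref{ne10}, i.e.\ it sends the source density $f\,dx$ to the target density $g\,dy$. Appealing to Theorem \ref{the1}, $u$ is a generalized solution of the corresponding Monge-Amp\`ere type equation \eqref{2pde}; geometrically this says exactly that the reflector $\Gamma_u$ redistributes the input $(\Omega,f)$ to the prescribed output $(\Omega^*,g)$. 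The only real obstacle in this program is the uniformity embedded in \eqref{2004} and (H1): the paraboloid envelope construction presupposes positivity of $u$ and $v$ and the condition $|Du| < 1$, which are not enforced by $(u,v) \in K$ alone, and one must therefore restrict to an admissible class via Remark \ref{rrr1} before invoking Theorem \ref{t003}.
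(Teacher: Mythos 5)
Your overall route is the same as the paper's: the paper likewise obtains the maximizing pair from Lemma \ref{p201}/Theorem \ref{t003} after checking \eqref{2003}--\eqref{2004}, reduces \eqref{ne04} to the energy balance \eqref{rpp1} through the special form \eqref{ne05}, verifies (H1) in the restricted sense of Remark \ref{rrr1} (equivalently via the reflection law, Proposition \ref{p222}), and reads off the generalized-solution statement from Theorem \ref{the1} and \eqref{2pde}.

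One computational slip in your (H2) check: from \eqref{2065}, $\varphi_s-|x-y|^2=\frac{1}{2v^2}-\frac{|x-y|^2}{2}=\frac{1-v^2|x-y|^2}{2v^2}$, not $\frac{1}{2v^2}$; hence $\det\bigl(\varphi_{xy}-\varphi_s^{-1}\varphi_{xs}\otimes\varphi_y\bigr)=(-v)^n\,\frac{1-v^2|x-y|^2}{2v^2\varphi_s}$, which vanishes when $v|x-y|=1$. So (H2) does not hold unconditionally on $U\times V\times\mathbb{R}$; it holds exactly where $v|x-y|<1$, i.e. where $|Du|<1$ along the map (compare \eqref{2070}--\eqref{2071}, where this factor appears as $\frac{1-|Du|^2}{1+|Du|^2}$). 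This does not sink the argument, since the relevant regime is the admissible one $u>0$, $v>0$, $|Du|<1$ of Remark \ref{rrr1} --- the same restriction you already invoke for \eqref{2004} and (H1), and the one the paper itself imposes --- but the assertion ``so (H2) holds'' should be stated as conditional on that restriction rather than as an unconditional identity.
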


We now derive the equation for the potential function $u$ by using the formula \eqref{2pde},	
	\begin{equation}\label{2066}
	\begin{split}
		\left|\det\,\left[D^2u+vI\right]\right|&=v^n\left|\det\,\left[I-\frac{1}{\varphi_s}(x-y)\otimes(x-y)\right]\right|\frac{f}{g}, \\
			&=v^n\left|1-\frac{1}{\varphi_s}|x-y|^2\right|\frac{f}{g},
	\end{split}
	\end{equation}
where in the second step we used the formula $\det\,[I+\xi\otimes\eta]=1+\xi\cdot\eta$ for any vectors $\xi, \eta\in\mathbb{R}^n$.
Combining \eqref{2065}--\eqref{2069}, we obtain that
	\begin{equation}\label{2070}
		\phi_s=\frac12\frac{|x-y|^2}{|Du|^2}+\frac12|x-y|^2=\frac{1+|Du|^2}{2|Du|^2}|x-y|^2.
	\end{equation}
Therefore, the equation \eqref{2066} becomes
	\begin{equation}\label{2071}
		\det\,\left[D^2u+\frac{1-|Du|^2}{2u}I\right]=\frac{(1-|Du|^2)^{n+1}}{(2u)^n(1+|Du|^2)}\frac{f}{g}.
	\end{equation}
	
The equation \eqref{2071} was also obtained in \cite{LT} by directly computing the Jacobian of the reflection mapping. 
Denote the matrix
	\[A(u,Du)=\frac{1-|Du|^2}{2u}I.\]
It satisfies the (A3) condition in \cite{MTW} without the orthogonal restriction, provided that $u>0$. The regularity of \eqref{2071} follows from \cite{LT,MTW}. Indeed, the considerations in \cite{MTW} stemmed from the treatment of the reflector antenna problem by Wang in \cite{W96}, which can be represented as an optimal transport problem on the sphere $\mathbb{S}^n$ with the cost function $c(x,y)=-\log(1-x\cdot y)$.

\subsection{Near field refractor problem with point source}\label{ss54}

The near field refractor problem has been studied by Guti\'errez and Huang \cite{GH}. 
Suppose the light emits from the origin surrounded by medium $I$ with positive intensity $f(X)$ for $X\in\Omega$, where $\Omega\subset\mathbb{S}^n$. There is a surface $\mathcal{R}$, separates two homogeneous and isotropic media $I$ and $I\!I$, such that all rays refracted by $\mathcal{R}$ into medium $I\!I$ illuminate a target hypersurface $\Omega^*$ in $\mathbb{R}^{n+1}$ with positive intensity $g$ on $\Omega^*$. Assume that $f,g$ satisfy the energy conservation condition
	\begin{equation}\label{nl01}
		\int_\Omega f=\int_{\Omega^*}g.
	\end{equation}

Let $n_1, n_2$ be the indices of refraction of media $I$, $I\!I$, respectively, and $\kappa=n_2/n_1$. When $\kappa<1$, the refracted rays tend to bent away from the normal, when $\kappa>1$, the refracted rays tend to bent towards the normal. 

There is a special interface surface $\mathcal{S}$ between media $I$ and $I\!I$, called Cartesian oval \cite{GH}, that refracts all rays emitting from the origin $O$ into the point $Y$. In the polar coordinates, represent 
	\[\mathcal{S}=\{X\rho_o(X)\,:\,X\in\mathbb{S}^n\}.\]
In the case $\kappa<1$, by the Snell law of refraction one has
	\begin{equation}\label{nl02}
		\rho_o(X)=h(X,Y,p)=\frac{(p-\kappa^2X\cdot Y)-\sqrt{\Delta(X\cdot Y)}}{1-\kappa^2},
	\end{equation}
where $p$ is the focal parameter, $\kappa|Y|<p<|Y|$ and $\Delta(t)=(p-\kappa^2t)^2-(1-\kappa^2)(p^2-\kappa^2|Y|^2)$ for $t\in\mathbb{R}$. For non-degenerate Cartesian ovals, there are some physical constraints for refraction
  \begin{equation}\label{nl08}
   \kappa|Y|<p<|Y|,\quad \rho_o\leq p,\quad \mbox{ and } p\leq X\cdot Y,\quad \forall\, X\in\Omega.
  \end{equation}
We refer the readers to \cite{GH} for more physical interpretations and detailed calculations. 

If we regard $p=p(Y)$ as a focal function on $\Omega^*$, we then have a family of Cartesian ovals. 
Represent the surface $\mathcal{R}$ in polar coordinate system as
	\[\mathcal{R}_\rho=\{X\rho(X)\,:\,X\in\Omega\},\]
where $\rho$ is a positive function. Recall that \cite{GH}, $\mathcal{R}$ is a near field refractor if at each point $X\rho(X)\in\mathcal{R}$ there exists a supporting Cartesian oval, i.e., for some $Y\in\Omega^*$, $\rho(X')\leq\rho_o(X',Y,p(Y))$ for all $X'\in\Omega$ with equality holds at $X'=X$. Therefore, the radial function $\rho$ satisfies 
	\begin{equation}\label{nl03}
		\rho(X)=\inf_{Y\in\Omega^*}\frac{(p-\kappa^2X\cdot Y)-\sqrt{\Delta(X\cdot Y)}}{1-\kappa^2},\quad X\in\Omega.
	\end{equation}
From the energy conservation, for each $Y\in\Omega^*$ there is an oval $\rho_0(\cdot,Y,p(Y))$ supporting to $\mathcal{R}_\rho$. We also have
	\begin{equation}\label{nl04}
		p(Y)=\sup_{X\in\Omega}(1-\kappa^2)\rho(X)+\kappa^2X\cdot Y+\sqrt{\Delta(X\cdot Y)},\quad Y\in\Omega^*.
	\end{equation}
The above relations are analogous to \eqref{419a}--\eqref{420a}. By setting $\eta=1/p$, the pair $(\rho,\eta)$ satisfies the dual relation
	\begin{eqnarray}\label{n105}
		&&\rho(X)=\inf_{Y\in\Omega^*}\frac{1-\eta(\kappa^2X\cdot Y+\sqrt{\Delta(X\cdot Y)})}{\eta(1-\kappa^2)}, \\
		&&\eta(Y)=\inf_{X\in\Omega}\frac{1-\eta(\kappa^2X\cdot Y+\sqrt{\Delta(X\cdot Y)})}{\rho(1-\kappa^2)}.
	\end{eqnarray}
Similarly to \eqref{2029b} we now formulate the refractor problem to the following nonlinear optimisation, which is more complicated than \eqref{2029b}. Let $u=\log\rho$ and $v=\log\eta$. Set the functional
	\begin{equation}\label{nl06}
		I(u,v) = \int_{\Omega}f(X)u + \int_{\Omega^*} g(Y)\left(v+\log(\frac{1-\kappa^2}{1-e^v(\kappa^2T^{-1}Y\cdot Y+\sqrt{\Delta(T^{-1}Y\cdot Y)}) } )\right),
	\end{equation}
and the constraint set 
	\[K=\{(u,v)\in C(\Omega)\times C(\Omega^*)\,:\,\phi(X,Y,u,v)\leq0\},\]
with the constraint function
	\begin{equation}\label{nl07}
		\phi(X,Y,u,v)=u+v+\log\left(\frac{1-\kappa^2}{1-e^v(\kappa^2X\cdot Y+\sqrt{\Delta(X\cdot Y)})}\right).
	\end{equation}

As in \cite{GH}, we make the following assumptions on $\Omega$ and $\Omega^*$, which are due to the physical constraints for refraction \eqref{nl08}:
\begin{itemize}
 \item[(R1)] There exists $\tau$ with $0<\tau<1-\kappa$ such that $X\cdot Y\geq(\kappa+\tau)|Y|$ for all $x\in\overline\Omega$ and all $Y\in\overline\Omega^*$.
 \item[(R2)] Let $0<r_0\leq\frac{\tau}{1+\kappa}\dist(0,\overline\Omega^*)$ and consider the cone in $\mathbb{R}^{n+1}$
  \[Q_{r_0}=\{tX\,:\,X\in\overline\Omega, 0<t<r_0\}.\]
 For each $\xi\in\mathbb{R}^n$ and for each $X\in Q_{r_0}$ we assume that $\overline\Omega^*\cap\{X+t\xi\,:\,t\geq0\}$ contains at most one point. That is, for each $X\in Q_{r_0}$ each ray emanating from $X$ intersects $\overline\Omega^*$ at most in one point. 
\end{itemize}

Note that from \eqref{nl02}
  \begin{equation}\label{nl09}
   \Delta(X\cdot Y)=\kappa^2p^2-2\kappa^2(X\cdot Y)p+\kappa^2(X\cdot Y)^2+\kappa^2(1-\kappa^2)|Y|^2.
  \end{equation}
Since $p\leq X\cdot Y$ by \eqref{nl08}, $\Delta(X\cdot Y)$ is decreasing in $p$. Since $p=e^{-v}$, $\Delta(X\cdot Y)$ is increasing in $v$. Hence \eqref{nl06}--\eqref{nl07} satisfy the assumption \eqref{2004}, where the constant in \eqref{2004} depends on $\kappa, \Omega, \Omega^*$ and the assumptions (R1) and (R2). See Remark 4.2 in \cite{GH} for more physical interpretations of (R1) and (R2). 

By the proof of Theorem \ref{t003} and Remark \ref{rnl2} we have the following existence result in the near field refractor problem, which was previously obtained in \cite{GH}. Note that the solutions need to be understood as generalised solutions.  
\begin{corollary}
Assume that $f,g$ satisfy \eqref{nl01} and (R1)--(R2). Then given $P_0\in Q_{r_0}$ with $0<|P_0|\leq\left(\frac{1-\kappa}{1+\kappa}\right)^2r_0$, there exists a dual maximising pair $(u,v)\in K$ such that $u(P_0/|P_0|)=\log|P_0|$. Moreover, $\rho=e^u$ is a solution of the refractor problem such that $\mathcal{R}_\rho$ passes through the point $P_0$.
\end{corollary}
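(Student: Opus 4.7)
The plan is to reduce the corollary to an application of Theorem \ref{t003} to the specific nonlinear optimization \eqref{nl06}--\eqref{nl07}, and then pin down a particular maximizer among the infinitely many granted by Remark \ref{rnl2} so that its potential satisfies the prescribed normalization $u(P_0/|P_0|) = \log|P_0|$. The refractor $\mathcal{R}_\rho$ with $\rho = e^u$ will then be the desired solution.

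First I would verify the hypotheses of Theorem \ref{t003}. The monotonicity conditions \eqref{2003} and \eqref{2004} have already been established in the paragraph following \eqref{nl09} from (R1)--(R2). Since the objective $F$ in \eqref{nl06} has the additive form \eqref{ne05} with densities $f$ and $g$, condition \eqref{ne04} collapses to the mass balance \eqref{ne06}, which is exactly the energy conservation \eqref{nl01}. Condition (H2) reduces to checking that the mixed-derivative matrix computed from $\varphi$ in \eqref{nl07} is non-degenerate on the physically admissible region cut out by \eqref{nl08} and (R1)--(R2); this is a direct (if tedious) calculation using the chain rule on $\log[1 - e^v(\kappa^2 X\cdot Y + \sqrt{\Delta(X\cdot Y)})]$.

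The geometric heart of the matter is (H1). For fixed $X_0 \in \Omega$, prescribing $\varphi_x(X_0, Y, v) = -p$ and $\varphi(X_0, Y, v) = -t$ is equivalent to prescribing the point $X_0 e^{-t}$ on the refractor together with the gradient of $\rho$ at $X_0$. These data determine the unit normal of $\mathcal{R}_\rho$, and Snell's law then determines a unique refracted ray in medium II. Assumption (R2) guarantees that this ray intersects $\overline\Omega^*$ in at most one point, fixing $Y$ uniquely, and then $v = -\log p(Y)$ is forced by the requirement that the Cartesian oval \eqref{nl02} touch $\mathcal{R}_\rho$ at $X_0 e^{-t}$. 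This is the refraction analogue of Proposition \ref{p222}. Under the proviso of Remark \ref{rrr1}, these arguments are all restricted to the admissible region, which is where we need them.

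With the hypotheses in hand, Theorem \ref{t003} produces a dual maximizing pair $(u, v) \in K$, and by the duality relation \eqref{n105}, $\rho = e^u$ is automatically supported by a Cartesian oval at every point, i.e., $\mathcal{R}_\rho$ is a near field refractor for the data $(f, g)$. To arrange $u(P_0/|P_0|) = \log|P_0|$, I would invoke Remark \ref{rnl2}: the proof of Lemma \ref{p201} produces a maximizer in each sublevel set $K_{C_0}$, with $\sup_{K_{C_0}} I$ independent of $C_0$, yielding a continuous one-parameter family of maximizers. The upper bound $|P_0| \leq \left(\frac{1-\kappa}{1+\kappa}\right)^2 r_0$ ensures that $\log|P_0|$ lies in the range of admissible values of $u$ at the point $P_0/|P_0|$, so by an intermediate value argument along this family one can select a maximizer with the required normalization. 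The main obstacle will be the careful verification of (H1), since the square root in \eqref{nl02} makes the algebra of the refraction map substantially heavier than in any of the reflector cases treated in Sections \ref{ss52}--\ref{ss53}, and one must stay inside the admissible region throughout.
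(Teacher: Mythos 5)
Your proposal follows essentially the same route as the paper, which proves this corollary in one line: the monotonicity hypotheses \eqref{2003}--\eqref{2004} are taken from the paragraph following \eqref{nl09} (using (R1)--(R2)), the balance condition \eqref{ne04} reduces via the special form \eqref{ne05} to the energy conservation \eqref{nl01}, and the existence of a maximizer normalized by $u(P_0/|P_0|)=\log|P_0|$ is obtained from the proof of Theorem \ref{t003} (i.e.\ Lemma \ref{p201}) together with the multiplicity of maximizers in Remark \ref{rnl2}. Your additional elaborations (the Snell-law verification of (H1) in the spirit of Section \ref{ss52} and the intermediate-value selection of the normalized maximizer) go slightly beyond the paper's sketch but are consistent with it and at the same level of rigor, so this is essentially the paper's argument.
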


\begin{remark}
In the case $\kappa>1$, by the physical constraints for $|Y|<p<\kappa|Y|$ the refracting piece of Cartesian oval is given by
  \[\mathcal{O}(Y,p)=\left\{\rho_0(X,Y,p)\,:\,X\cdot Y\geq\frac{p+\sqrt{(\kappa^2-1)(\kappa^2|Y|^2-p^2)}}{\kappa^2}\right\}\]
with
  \begin{equation}\label{nl10}
   \rho_0(X,Y,p)=\frac{(\kappa^2X\cdot Y-p)-\sqrt{(\kappa^2X\cdot Y-b)^2-(\kappa^2-1)(\kappa^2|Y|^2-p^2)}}{\kappa^2-1}.
  \end{equation}
We have similar existence results by replacing the assumptions (R1), (R2) by the following ones:
\begin{itemize}
 \item[(R3)] $\inf_{X\in\overline\Omega,Y\in\overline\Omega^*}X\cdot\frac{Y}{|Y|}\geq\frac{1}{\kappa}+\tau$, for some $0<\tau<1-\frac{1}{\kappa}$.
 \item[(R4)] Let $0<r_0<\frac{\kappa^2\tau^2}{4(\kappa-1)^2}\inf_{Y\in\Omega^*}|Y|$ and consider the cone in $\mathbb{R}^{n+1}$
  \[Q_{r_0}=\{tX\,:\,X\in\overline\Omega, 0<t<r_0\}.\]
 For each $\xi\in\mathbb{R}^n$ and for each $X\in Q_{r_0}$ we assume that $\overline\Omega^*\cap\{X+t\xi\,:\,t\geq0\}$ contains at most one point. That is, for each $X\in Q_{r_0}$ each ray emanating from $X$ intersects $\overline\Omega^*$ at most in one point. 
\end{itemize}

\end{remark}

\subsection{Near field refractor problem with parallel source}\label{ss55}

Recently, Guti\'errez and Tournier studied the parallel refractor problem \cite{GuT}, which can be described as follows. 
Suppose that a parallel light emits from $\Omega\subset\mathbb{R}^n\times\{0\}$ along $e_{n+1}=(0,\cdots,0,1)$ with positive intensity $f\in L^1(\Omega)$, $\Omega^*$ is a hypersurface in $\mathbb{R}^{n+1}$, which is referred to as the target domain. Suppose that $\Omega$ and $\Omega^*$ are surrounded by two homogeneous and isotropic media $I$ and $I\!I$, respectively. 
One seeks an optical surface $\mathcal{R}$ interface between media $I$ and $I\!I$, such that all rays refracted by $\mathcal{R}$ into medium $I\!I$ are received at the surface $\Omega^*$, and the prescribed radiation intensity received at each point $Y\in\Omega^*$ is $g(Y)$. 
Assume the energy conservation condition
  \begin{equation}\label{pr01}
   \int_\Omega f=\int_{\Omega^*}g.
  \end{equation}

Let $n_1, n_2$ be the indices of refraction of media $I$, $I\!I$, respectively, and $\kappa=n_1/n_2$. 
We assume that media $I\!I$ is denser than media $I$, that is, $\kappa<1$. The case when $\kappa>1$ can be treated in a similar way but the geometry of surface changes \cite{GH1,GuT}.
For simplicity, we assume that $\Omega^*\subset\{y_{n+1}=h\}$ for a constant $h>0$, and denote $Y=(y,h)$ for points on $\Omega^*$ and $X=(x,0)$ for points on $\Omega$. 

Consider the lower part of \emph{``inverse'' ellipsoid of revolution} with focus at $y\in\Omega^*$ and the axial direction $-e_{n+1}$. It has the uniform refracting property, namely all rays from the parallel source $\Omega$ along $e_{n+1}$ will be refracted to the focus points $y$. Explicitly, it is the graph of the function \cite{GuT}
  \begin{equation}
   \rho_{y,v}(x)=h-\frac{\kappa v}{1-\kappa^2}-\sqrt{\frac{v^2}{(1-\kappa^2)^2}-\frac{|x-y|^2}{1-\kappa^2}},
  \end{equation}
where $v$ is a constant satisfying $\frac{h(1-\kappa^2)}{\kappa}\leq v\leq \frac{h(1-\kappa^2)(1+\kappa)^2}{\kappa^3}$. The function $\rho_{y,v}$ is defined on the ball $B_{v/\sqrt{1-\kappa^2}}(y)$. 

If we regard $v=v(y)$ as a function on $\Omega^*$, we then have a family of ``inverse'' ellipsoids. 
Represent the refractor $\Gamma $ as graph $u|_\Omega$ for $u>0$, namely
  \[\Gamma_u=\{(x,u(x)) : x\in\Omega\}.\]
In an ideal system, at each point $(x,u(x))\in\Gamma_u$ on an \emph{admissible} refractor $\Gamma_u$ there exists a \emph{supporting ellipsoid}, i.e., for some $y\in\Omega^*$
  \begin{eqnarray*}
   u(x)\!\!&=&\!\!h-\frac{\kappa v(y)}{1-\kappa^2}-\sqrt{\frac{v(y)^2}{(1-\kappa^2)^2}-\frac{|x-y|^2}{1-\kappa^2}},\\
   u(x')\!\!&\leq&\!\!h-\frac{\kappa v(y)}{1-\kappa^2}-\sqrt{\frac{v(y)^2}{(1-\kappa^2)^2}-\frac{|x'-y|^2}{1-\kappa^2}}\quad\forall\, x'\in\Omega.
  \end{eqnarray*}

Similarly we can formulate this problem to a nonlinear optimisation problem \eqref{0001}--\eqref{prob}.
Set the functional
	\begin{equation}
		I(u,v) = \int_{\Omega}f(x)u + \int_{\Omega^*} g(y)\left(\frac{\kappa v(y)}{1-\kappa^2}+\sqrt{\frac{v(y)^2}{(1-\kappa^2)^2}-\frac{|T^{-1}y-y|^2}{1-\kappa^2}}\right),
	\end{equation}
and the constraint set 
	\[K=\{(u,v)\in C(\Omega)\times C(\Omega^*)\,:\,\phi(X,Y,u,v)\leq0\},\]
with the constraint function
	\begin{equation}
		\phi(X,Y,u,v)=u+\frac{\kappa v(y)}{1-\kappa^2}+\sqrt{\frac{v(y)^2}{(1-\kappa^2)^2}-\frac{|x-y|^2}{1-\kappa^2}}-h.
	\end{equation}
  
As in \cite{GuT} we need the following assumptions on the relative position of $\Omega$ and $\Omega^*$, which are due to the physical constraints for refraction:
 \begin{itemize}
 \item[(A)] There exists $0<\delta<1$ such that $\Omega\subset B_{\delta h}\sqrt{1-\kappa^2}/\kappa(y)$ for all $y\in\Omega^*$.
 \item[(B)] Set $M=h((1+\kappa)^3/\kappa^3-1)$. Assume that for all $x\in\Omega\times[-M,0]$ and for all $\gamma\in\mathbb{S}^n$, the ray $\{x+t\gamma : t>0\}$ intersects $\Omega^*$ in at most one point. 
\end{itemize}
The first condition is equivalent to the assumption that there exists $0<\beta<1$ such that $\langle-e_{n+1},\frac{X-Y}{|X-Y|}\rangle\geq\beta$ for all $Y\in\Omega^*$ and $x\in\Omega$ \cite{GuT}.

As in the previous example one can verify the hypotheses of Theorem \ref{t003}.
Therefore, by the proof of Theorem \ref{t003} and Remark \ref{rnl2} we have the existence result: 
\begin{corollary}
Assume that $f,g$ satisfy \eqref{pr01} and (A)--(B). Then for $x_0\in\Omega$ and $t\leq -\beta$ there exists a parallel refractor $u$ satisfying $u(x_0)=t$.
\end{corollary}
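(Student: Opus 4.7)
The plan is to imitate the strategy already deployed for the point--source reflector, parallel--source reflector, and point--source refractor examples: recast the parallel refractor problem as an instance of the nonlinear optimization \eqref{0001}--\eqref{0003} with the given $F,\varphi$, verify the hypotheses of Theorem \ref{t003}, and then invoke Remark \ref{rnl2} to normalize the maximizing potential so that $u(x_0)=t$. The output of Theorem \ref{t003} together with Lemma \ref{p202} and Proposition-style arguments analogous to Proposition \ref{p222} will identify $u$ with a refractor, by showing that the optimal mapping $T$ associated to $(u,v)$ coincides with the Snell refraction mapping and that the level-$v(y)$ ellipsoids $\rho_{y,v(y)}$ are supporting ellipsoids of $\Gamma_u$.

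The monotonicity hypotheses \eqref{2003}--\eqref{2004} are immediate: since $f,g>0$, one has $F_t=f\ge 0$ and $F_s=g\cdot\varphi_s\ge 0$, while a direct computation gives
\[
\varphi_s=\frac{\kappa}{1-\kappa^2}+\frac{v/(1-\kappa^2)^2}{\sqrt{\,v^2/(1-\kappa^2)^2-|x-y|^2/(1-\kappa^2)\,}}\ge\frac{\kappa}{1-\kappa^2}>0,
\]
so $\theta_0:=\kappa/(1-\kappa^2)$ works uniformly on the admissible range of $v$. With $F$ and $\varphi$ in the present form, $F_t=f$ and $F_s/\varphi_s=g$, so the mixed condition \eqref{ne04} reduces exactly to the energy balance \eqref{pr01}, which is assumed.

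The nontrivial points are (H1) and (H2), and here the physical assumptions (A)--(B) play their role. For (H1), given $x_0\in\Omega$ together with $(p,t)$, the equations $\varphi_x(x_0,y,s)=-p$ and $\varphi(x_0,y,s)=-t$ encode the requirement that the supporting ellipsoid at $(x_0,-t)\in\Gamma_u$ with normal data $p$ has focus $y$ and parameter $v=s$. Snell's law of refraction (applied to a parallel incident ray refracted by a surface with that tangent plane) determines the refracted direction uniquely, so the focus $y$ must lie on a specific ray emanating from $(x_0,-t)$; assumption (B), which says each such ray meets $\Omega^*$ in at most one point, then pins down $y$, and the first equation $\varphi=-t$ pins down $s$. (H2) is a nondegeneracy statement on the Jacobian matrix attached to $\varphi$; one computes $\varphi_{xy}$, $\varphi_{xs}$, $\varphi_y$, and checks that the resulting matrix has nonvanishing determinant on the admissible range, using assumption (A) which keeps $|x-y|<v/\sqrt{1-\kappa^2}$ bounded away from the degeneracy $|x-y|=v/\sqrt{1-\kappa^2}$. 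This geometric verification is the main obstacle, as it is for all the geometric-optics examples in Section \ref{s5}; the computation is direct but must be carried out carefully with the square-root expression.

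Having verified the hypotheses of Theorem \ref{t003}, we obtain a dual maximizing pair $(u,v)\in K$ together with an optimal mapping $T$ satisfying \eqref{as90}. Because $\phi(x,T(x),u(x),v(T(x)))=0$ at almost every $x$, the ellipsoid $\rho_{T(x),v(T(x))}$ touches $\Gamma_u$ from above at $x$, so $\Gamma_u$ is an admissible refractor and $T$ is the refraction mapping sending $f\,dx$ to $g\,dy$. Finally, to prescribe $u(x_0)=t$, apply Remark \ref{rnl2}: the proof of Lemma \ref{p201} produces maximizing pairs with arbitrarily chosen infimum $C_0=\inf_\Omega u$, and by continuity a suitable additive normalization (equivalently, a suitable choice of $C_0$) makes the value at $x_0$ equal to $t$. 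The restriction $t\le -\beta$ corresponds to the admissible range arising from assumption (A), i.e., to the constraint that the focal parameter $v$ stay within the physical interval $[h(1-\kappa^2)/\kappa,h(1-\kappa^2)(1+\kappa)^2/\kappa^3]$ for which $\rho_{y,v}$ is defined on $\Omega$, and this is precisely what makes the pair $(u,v)$ lie in $K$.
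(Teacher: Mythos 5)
Your proposal is correct and follows essentially the same route as the paper: the paper's own (very brief) justification is precisely to verify the hypotheses of Theorem \ref{t003} ``as in the previous example''---monotonicity \eqref{2003}--\eqref{2004}, the balance condition \eqref{ne04} reducing to \eqref{pr01}, and (H1)--(H2) via the refraction law together with (A)--(B)---and then invoke the proof of Theorem \ref{t003} and Remark \ref{rnl2} to normalize $u(x_0)=t$. Your write-up simply supplies more of the computational detail (e.g.\ $\varphi_s\geq\kappa/(1-\kappa^2)$ and $F_s/\varphi_s=g$) that the paper leaves implicit.
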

Note that the solutions need to be understood as generalised solutions as before. This existence result was previously obtained in \cite{GuT}, where they first consider the discrete case when the target is a set of points, then use an approximation to obtain the existence in the general case.

\begin{remark}
In addition to the examples arising in reflectors and refractors, there are many other nonlinear optimisation problems with potentials. For example, one can perturb the linear optimisation problem, such as the optimal transportation, to get a nonlinear one.
Moreover, similarly to \cite{MM} one can show that the objective functional of any solvable linear optimisation problem can be perturbed by a differentiable, convex or Lipschitz continuous nonlinear functional in such a way that (i) a solution of the original linear problem is a local or global solution of the perturbed nonlinear problem; (ii) each global solution of the perturbed nonlinear problem is also a solution of the linear problem. 
\end{remark}


\end{document}